\documentclass[a4paper]{amsart}

\usepackage{tikz-cd}
\usepackage[capitalise]{cleveref}
\usepackage{enumitem}
\usepackage{stmaryrd}
\usepackage{quiver}
\usepackage{mathtools}
\usepackage{libertine}
\usepackage{libertinust1math}
\usepackage{natbib}
\usepackage{amssymb}
\usepackage{url}

\newtheorem{theorem}{Theorem}[section]
\newtheorem*{theorem*}{Theorem}
\newtheorem{lemma}[theorem]{Lemma}
\newtheorem{corollary}[theorem]{Corollary}
\newtheorem{proposition}[theorem]{Proposition}

\theoremstyle{definition}
\newtheorem{example}[theorem]{Example}
\newtheorem{definition}[theorem]{Definition}
\newtheorem{remark}[theorem]{Remark}

\newcommand{\mc}[1]{\mathcal{#1}}
\newcommand{\mb}[1]{\mathbf{#1}}
\newcommand{\mbb}[1]{\mathbb{#1}}
\newcommand{\mr}[1]{\mathrm{#1}}

\newcommand{\Set}{\mb{Set}}

\newcommand{\Fin}{\mb{Fin}}

\newcommand{\sub}{\mr{Sub}}
\newcommand{\Cat}{\mb{Cat}}
\newcommand{\Mon}{\mb{Mon}}
\newcommand{\ProMon}{\mb{ProMon}}
\newcommand{\CTopoi}{\mb{CTopoi}}
\newcommand{\PTc}{\mb{PT}_c}
\newcommand{\PT}{\mb{PT}}
\newcommand{\Topoi}{\mb{Topoi}}

\newcommand{\psh}{\mb{Psh}}

\newcommand{\Mod}{\mb{Mod}}

\newcommand{\op}{^{\mathrm{op}}}
\newcommand{\inv}{^{\mathrm{-1}}}

\newcommand{\geo}[1]{\left|#1\right|}
\newcommand{\prth}[1]{\left(#1\right)}
\newcommand{\ov}[1]{\overline{#1}}

\newcommand{\set}[1]{\{\,#1\,\}}
\newcommand{\eff}{\Leftrightarrow}

\newcommand{\id}{\mathrm{id}}

\newcommand{\nt}{\Rightarrow}
\newcommand{\scomp}[2]{\{\,#1\,\mid\,#2\,\}}

\newcommand{\surj}{\twoheadrightarrow}
\newcommand{\inj}{\rightarrowtail}
\newcommand{\hook}{\hookrightarrow}

\newcommand{\N}{\mbb N}

\newcommand{\ex}[2]{\exists #1\!:\!#2.\,}
\newcommand{\fa}[2]{\forall #1\!:\!#2.\,}

\makeatletter
\newcommand{\ct@}[2]{%
  \vtop{\m@th\ialign{##\cr
    \hfil$#1\operator@font lim$\hfil\cr
    \noalign{\nointerlineskip\kern1.5\ex@}#2\cr
    \noalign{\nointerlineskip\kern-\ex@}\cr}}%
}
\newcommand{\ct}{%
  \mathop{\mathpalette\ct@{\rightarrowfill@\textstyle}}\nmlimits@
}
\makeatother
\makeatletter
\newcommand{\lt@}[2]{%
  \vtop{\m@th\ialign{##\cr
    \hfil$#1\operator@font lim$\hfil\cr
    \noalign{\nointerlineskip\kern1.5\ex@}#2\cr
    \noalign{\nointerlineskip\kern-\ex@}\cr}}%
}
\newcommand{\lt}{%
  \mathop{\mathpalette\lt@{\leftarrowfill@\textstyle}}\nmlimits@
}
\makeatother

\begin{document}

\title{Duality theory for categorical theories}

\author{Lingyuan Ye}
\address{%
Lingyuan \textsc{Ye}\newline
Department of Computer Science and Technology\newline
University of Cambridge\newline
Cambridge, UK%
}

\begin{abstract}
We have generalised the notion of categorical theory in model theory to the context of coherent theories. We prove a duality result between the full sub-2-category of pretopoi which are categorical, and the 2-category of profinite monoids. We also study the geometry of profinite monoids via the classifying topos construction, and show it identifies them as a full sub-2-category of the 2-category of topoi.\vspace{1ex}

\noindent
\textbf{Keywords.} duality, model theory, categoricity, coherent logic, profinite monoid.
\end{abstract}

\maketitle

\section{Introduction}

This paper characterises the space of models of a special class of first-order theories, which are called \emph{categorical} in model theory (cf.~\cite{hodges1997shorter})\footnote{We understand that the terminology is unfortunate for categorical logic. However, many important theorems in model theory, e.g. Morley's \emph{categoricity theorem}~\cite{morley1965categoricity}, uses this terminology. Thus, in this paper we choose to keep in line with this tradition.}. For a classical first-order theory, it is categorical if all of its models are isomorphic to each other. Traditional model theory often ignore such theories because it almost exclusively studies theories of one sort. By the L\"owenheim–Skolem theorem (which is an easy consequence of the compactness theorem), if a theory has an infinite model, it has models of arbitrarily large cardinality. Thus, if a theory is categorical, then it can only have finite models. Though this claim still holds for multi-sorted theories (in an appropriate sense, cf.~\cref{lem:finitemodel}), but as we will see, in this case the model theory of multi-sorted categorical theories are much more interesting, since they possess highly non-trivial automorphism groups.

In fact, in this paper we will not follow a syntactic approach towards first-order theories as model theory usually does. We will adopt the \emph{categorical logic} approach, which identifies first-order theories as \emph{pretopoi} (cf.~\cite[D1]{johnstone2002sketches}). Thus, more precisely speaking for us a first-order theory will be a coherent theory. Classical first-order theories then corresponds to \emph{Boolean} pretopoi, where the subobject lattice of any object has complements, i.e. forms a Boolean algebra. In this case, the notion of a categorical theory needs to be slightly modified to account for the lack of negation in the syntax. We will call a coherent theory \emph{categorical} if all of its models are a \emph{retract} of a particular model (cf.~\cref{def:categorical}).

The main result of this paper can be summarised as follows. We write $\PTc$ for the full sub-2-category of pretopoi that are categorical. For any $\mc C\in\PTc$, its category of models $\Mod(\mc C)$ will be equivalent to the Cauchy completion of a unique monoid, viz. a category with one object. In this case, the monoid has a canonical \emph{profinite topology}, making it into a profinite monoid. This induces an equivalence of 2-categories:

\begin{theorem*}[\ref{thm:main}]
    The 2-category of categorical pretopoi is dually equivalent to the 2-category of profinite monoids, and this equivalence restricts to one between Boolean categorical pretopoi and profinite groups,
    \[
    \begin{tikzcd}
    \mb{BPT}_c\op \ar[r, "\simeq"] \ar[d, hook] & \mb{ProGrp} \ar[d, hook] \\
    \PTc\op \ar[r, "\simeq"] & \ProMon.
    \end{tikzcd}
    \]
\end{theorem*}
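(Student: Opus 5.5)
I plan to construct the two functors explicitly and check that the unit and counit are equivalences. For $\mc C\in\PTc$, fix the model $M$ of which every model is a retract (\cref{def:categorical}). By \cref{lem:finitemodel}, $M$ is finite in every sort. Let $E_{\mc C}=\mathrm{End}(M)$ in $\Mod(\mc C)$. Since every model is a retract of $M$, $\Mod(\mc C)$ is the Cauchy completion of $E_{\mc C}$ viewed as a one-object category. I topologise $E_{\mc C}$ as the limit
\[
E_{\mc C}\hook \prod_{A\in\mc C}\mathrm{End}_{\Set}(M(A)),
\]
with each factor finite and discrete. Since the family of restriction maps is closed under the compatibility conditions (naturality along morphisms of $\mc C$), $E_{\mc C}$ is a closed submonoid of a product of finite monoids, and hence profinite. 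Equivalently, $E_{\mc C}=\lt_{A}E_A$, where $E_A$ is the finite image of $E_{\mc C}$ in $\mathrm{End}(M(A))$. A coherent functor $F\colon\mc C\to\mc D$ between categorical pretopoi sends the canonical model $N$ of $\mc D$ to the model $NF$ of $\mc C$, which is a retract of $M$. Conjugating by the retraction, and composing with the idempotent, gives a continuous monoid map $E_{\mc D}\to E_{\mc C}$, although it may not preserve units strictly. This is where the Cauchy completion and the choice of 2-cells need care: I will define $\ProMon$'s 1-cells and 2-cells to match, so that the assignment is 2-functorial.

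In the other direction, for a profinite monoid $P$ I take the category $\mc S(P)$ of finite sets with a continuous $P$-action, that is, actions factoring through a finite discrete quotient. I will check that $\mc S(P)$ is a pretopos: finite limits, finite coproducts and quotients of equivalence relations are computed in finite sets, and continuity is preserved. I will also check that the forgetful functor $U\colon\mc S(P)\to\Fin$ is a model. Every other model is a retract of $U$. The argument is a Yoneda/Tannakian one. Models of $\mc S(P)$ are coherent functors $\mc S(P)\to\Set$. Any such functor is a filtered colimit of representables on the quotients $P/\!\sim$ by open congruences; these quotients are finite $P$-sets. Because the functor preserves finite coproducts and $P$ acts transitively up to retract, it is forced to be a retract of $U$. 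Tannaka reconstruction then gives $\mathrm{End}(U)\cong\lt_{\sim}P/\!\sim\;=P$ as topological monoids, which is the counit.

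For the unit, I need $\mc C\simeq\mc S(E_{\mc C})$ via $A\mapsto M(A)$. Faithfulness and conservativity follow from the completeness theorem for coherent logic. If $M$ cannot distinguish two morphisms, then no model can, because every model is a retract of $M$. Fullness and essential surjectivity are where I expect the main obstacle. I need every $E_{\mc C}$-equivariant map $M(A)\to M(B)$ to be definable, and every finite continuous $E_{\mc C}$-set to arise as a subquotient of some $M(A)$. My first attempt will be to show that an equivariant subset $S\subseteq M(A)$ is definable. The idea is that $S$ is a union of orbits. Each element $a\in M(A)$ has a "type" given by the subobjects of $A$ that contain it, and two elements with the same positive type are related by an endomorphism. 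This would use categoricity together with a compactness argument, namely an amalgamation realising types in $M$. The definable set generated by $a$ is then exactly $E_{\mc C}\cdot a$. Graphs of maps reduce to subsets of $M(A\times B)$. For essential surjectivity, continuity means the action factors through some $E_{A}$, so the $P$-set embeds equivariantly into a finite power or coproduct of copies of $M(A)$, and is then a quotient of a definable subset. Pretopos exactness then produces the object.

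Finally, for the Boolean restriction: if $\mc C$ is Boolean, complements of definable orbits are definable. Every endomorphism of $M$ is then elementary, hence injective, hence bijective on finite sorts, so $E_{\mc C}$ is a profinite group. Conversely, if $P$ is a group, then complements of sub-$P$-sets are sub-$P$-sets, so $\mc S(P)$ is Boolean. The equivalence therefore restricts as in the square.
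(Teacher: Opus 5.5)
\textbf{Gap: you have not shown that $\mc S(P)$ is categorical.} Your claim that every model of $\mc S(P)$ is a retract of the forgetful model $U$ is not proved. It is not a side check. Without it $\mc S(P)$ is not known to lie in $\PTc$, $\mathrm{End}(U)$ is not known to be its associated monoid, and you get no essential surjectivity onto $\ProMon$.

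Your justification does not reach the conclusion. That a coherent $F$ is a filtered colimit of corepresentables is just a restatement of flatness and says nothing about being a retract of $U$. The phrase ``preserves finite coproducts and $P$ acts transitively up to retract'' is not an argument. Two ideas are missing.
\begin{enumerate}
\item \emph{The finite case.} Let $P_i$ be a finite quotient monoid of $P$. By Diaconescu, the restriction of $F$ to $\Fin[P_i]\subseteq\mc S(P)$ is a flat functor on $BP_i\op$, i.e.\ an ind-object of the \emph{finite} category $BP_i$. Such ind-objects are retracts of representables. Hence $F(P_i)\cong e_iP_i$ as right $P_i$-sets for some idempotent $e_i$, and $F|_{\Fin[P_i]}\cong e_i(-)$.
\item \emph{A gluing step.} These isomorphisms are not canonical. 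For each $i$, the elements $x\in F(P_i)$ inducing such an isomorphism form a finite nonempty set. You must check that the transition maps $F(P_j)\to F(P_i)$ send these sets into one another. A point of their inverse limit then gives a compatible idempotent $e=(e_i)\in\lt_{i}P_i=P$ with $F\cong e\,U(-)$.
\end{enumerate}
This is exactly what the paper establishes in \cref{cor:promoncategorical}. It uses \cref{prop:intodiscrete} for finite targets and the cofiltered limit $\Set[\mc N]\simeq\lt_{i}\Set[N_i]$ in \cref{thm:fullyfaithfulpromonoidtocoh}.

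\textbf{The rest of your plan, and how it differs from the paper.} The remainder is sound, and your unit is a genuinely different route. The paper gets $\mc C\simeq\Fin[\mc M]$ from weak conceptual completeness, which needs $\Mod(\Fin[\mc M])\simeq\ov{B\mc M}$, i.e.\ the very categoricity result above. You instead prove the equivalence by hand via definability of equivariant subsets, which is more elementary. Three repairs are needed.
\begin{enumerate}
\item \emph{State the type lemma with an inclusion.} You need: if every subobject of $A$ containing $a$ also contains $b$, then $b=f(a)$ for some $f\in E_{\mc C}$. ``Same positive type'' does not yield $M(B_a)\subseteq E_{\mc C}\cdot a$. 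The inclusion version follows by compactness. Take the positive diagram of $M$, the elementary diagram of a second copy of $M$, and the axiom $c_a=d_b$. Any elementary extension of $M$ is isomorphic to $M$ because all sorts are finite.
\item \emph{Fix the essential-surjectivity step.} A finite continuous $E_{\mc C}$-set need not embed into coproducts of powers of $M(A)$. For a counterexample, let the full transformation monoid $T_2$ act on $\{0,1\}$, and take a connected $T_2$-set with a point fixed by both constant maps. Instead, observe that the regular $E_A$-set is the orbit in $M(A^n)$ of a tuple enumerating $M(A)$, hence definable. Then $X$ is a quotient of finitely many copies of it by an equivariant, hence definable, relation. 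Conservativity of $M$ makes this relation an equivalence relation in $\mc C$, so the quotient exists there.
\item \emph{Complete the 2-functor.} You must still define $\mc S$ on 1-cells ($X\mapsto\theta(1)X$) and on 2-cells, and verify pseudonaturality of the unit and counit. This is routine.
\end{enumerate}
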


We emphasis here that the above duality is 2-categorical in nature. The 2-categorical structure on $\PTc$ is induced by that on the 2-category $\PT$ of all pretopoi, i.e. the 1-cells are coherent functors and 2-cells are natural transformations between them. The 1-cells in $\ProMon$ are \emph{continuous semigroup homomorphisms}, and 2-cells are \emph{conjugates} between them. In the remaining part of the introduction, we will briefly discuss the motivation for such a duality result from both a logical and geometric perspective, and also summarise the main contents of the remaining sections.

\subsection{A Logical Motive: Spectrum of First-Order Theories}

From a logical perspective, the main motivation for obtaining such a result takes root in the general study of the duality theory for first-order logic. G\"odel's completeness theorem states that the \emph{category} of models of a first-order theory can \emph{separate} the syntax of a theory, viz. if two formulas are interpreted identically over all models, then they are provably equivalent over this theory. A duality result, furthermore, asks for \emph{reconstruction} of the original theory from its models. The central question is what additional information on the category of models allows us to recover the original first-order theory up to equivalence.

In the propositional case, we have the Stone and Priestly duality that answers this problem. More precisely, for a distributive lattice, the additional information on its poset of models that allows us to reconstruct it consists of a certain \emph{topology}, which is called the Priestley topology. Any distributive lattice determines a Priestley topology on its poset of models, and the original distributive lattice is recoverable from this topological poset up to isomorphism.

For first-order theories, \citet{makkai1982stone} proved his celebrated \emph{conceptual completeness} result, which shows that a pretopos is recoverable by the \emph{ultracategorical} structure on its category of models. Roughly speaking, the ultracategory structure records the operation of ultraproducts of models, which in some sense is a categorification of convergence of ultranets in a topological space. \citet{lurie2018ultracategories} has then reformulated the theory of ultracategories \`a la Makkai and provided an alternative proof for conceptual completeness. Following his efforts, more recently there have been a proliferation of works on ultracategories~\citep{VANGOOL2026108269,saadia2025extending,hamad2025generalised} that are much more categorical in nature. From a quite different perspective, \citet{di2022geometry} describes the ultrastructures of models of pretopoi as a \emph{Kan-injectivity} condition in the 2-category of topoi. Extending this idea,~\citet{di2025logic} has provided similar characterisations for other subfragments of geometric logic. 

However, unlike Stone or Priestley duality for propositional theories where the additional structures on posets of models are described by topological data, the notion of an ultracategory is much harder to work with. In most of the previously mentioned approaches towards ultracategories, a ultracategory in general involves a \emph{large} amount of data, i.e. the ultrastructure specifies for \emph{each} ultrafilter $\mu$ on \emph{each} set $X$ how to take ultraproduct w.r.t. an $X$-indexed family of models. However, the ultracategories we care about in practice intuitively only involves only a \emph{small} amount of information, since the ultrastructure should be completely determined by the syntax, which is encoded by a small pretopos. The notable exception is the approach set out in~\cite{di2022geometry,di2025logic}, which characterises the ultrastructure as a \emph{property} of topoi. We would also like to mention the approach of~\citet{breiner2013duality}, which constructs a the spectrum of first-order theory much as a stack of coherent categories on certain topological spaces, akin to the spectrum construction in algebraic geometry.

The initial aim of this paper is to find a special class of pretopoi, where the additional ultrastructure on their categories of models can in fact be described again by a suitable \emph{topology}. Of course, any distributive lattice can be viewed as a first-order theory, thus we are not interested in propositional examples where the category of models form a poset. A non-trivial example has already been shown in~\citet{ye2024stack}, where it shows a duality result for \emph{essentially finite} theories. A theory is essentially finite if its category of models is equivalent to a \emph{finite} category. There is a dual equivalence between Cauchy complete finite categories and essentially finite first-order theories. However, in this case the ultrastructure is trivial, since any ultraproduct on a finite family of points simply selects some model in that family. In other words, the ultrastructure on finite Cauchy complete categories are given by the \emph{discrete topology}.

From this perspective, here we have established the first non-trivial example of a class of first-order theories, such that there exist genuine topological structures on their categories of models that allow for a duality reconstruction theorem. This class is the collection of categorical coherent theories, and the topologies are certain profinite topologies. We will introduce the correct notion of categorical pretopos in~\cref{sec:categoricaltheory}, and study some of their properties. \cref{sec:profmonoidofcattheory} will then explain the associated profinite monoid for a categorical pretopos, which constructs a 2-functor $\PTc\op \to \ProMon$. Our proof that this constructions gives an equivalence of 2-categories will be based on studying the \emph{classifying topoi} of profinite monoids, which connects to another general motivation of this paper.

\subsection{A Geometric Motive: Classifying Topoi of Profinite Monoids and Semi-Galois Theory}

The results in this paper might be independently interesting to the readers without a logical motivation. In particular, for a profinite monoid $\mc M$, its \emph{classifying topos} $\Set[\mc M]$ can be viewed as the category of continuous left $\mc M$-sets. We will recall in~\cref{sec:toposofprofmonoid} that the classifying topos of a profinite monoid $\mc M$ is a coherent topos. We will show that this construction is 2-functorial, i.e. it exists a 2-functor $\ProMon \hook \CTopoi$. Here $\CTopoi$ is the 2-category of coherent topoi, with 1-cells being coherent geometric morphisms, i.e. those whose inverse images preserve coherent objects. 

In~\cref{sec:duality}, we will show that this 2-functor is \emph{2-fully faithful}. In fact, we will show this 2-functor is also 2-fully faithful when post-composed with the (non-full) inclusion $\CTopoi \inj \Topoi$, i.e. any geometric morphism between topoi of the form $\Set[\mc M]$ for a profinite monoid $\mc M$ is automatically coherent (cf.~\cref{thm:fullyfaithfulpromonoidtocoh}). This shows we can identify profinite monoids as a full sub-2-category of the 2-category of topoi,
\[ \ProMon \hook \Topoi \]
and thus implies that $\Topoi$ is a good place to study the geometry of profinite monoids, and also the associated logical structure of categorical theories. This result alone suffices to characterise the category of points of the classifying topos $\Set[\mc M]$ of profinite monoids (cf.~\cref{cor:promoncategorical}). We will use this characterisation to finally prove the main duality theorem in~\cref{sec:duality_for_categorical_theories}. 

Though not being the main focus of this paper, these results have a deep connection to the study of \emph{semi-Galois theory} in the sense of~\citet{URAMOTO2025107863}. In fact, \emph{loc. cit.} has introduced the notion of semi-Galois theories, and established a similar duality result between profinite monoids. The results in this paper then provide a new characterisation of semi-Galois theories, i.e. they are exactly categorical pretopoi (equipped with the canonical model). Dually, this also provides an alternative characterisation of classifying topoi of profinite monoids: they are exactly the categorical coherent topoi, i.e. those coherent topoi which has a point where all the points are retract of it.

We mention that the duality result established in this paper is a two-fold generalisation of the one obtained in \emph{loc. cit.}. Firstly, we consider a wider class of maps between profinite monoids, viz. continuous \emph{semigroup} morphisms, which does not necessarily preserve the identity. From the perspective of the geometry of Galois theory, this means our result provides a duality theory accounting for maps not necessarily preserves the base points of the underlying space. From a logical perspective, this means we consider all coherent functors between the categorical pretopoi, instead of those that commutes with the canonical model. Secondly, our duality is \emph{2-categorical} in nature, where the 2-categorical structure is important from the logical perspective, since they account for natural transformation between coherent functors. We believe that this perspective will provide a bridge between model theory and semi-Galois theory. This in turn has a deep connection with the theory of regular languages and finite automata, which is explained in~\citet{URAMOTO2025107863}.

\subsection*{Conventions}

In this paper we assume familiarity with the basic language of categorical logic and topos theory. We will freely identify a pretopos $\mc C$ as a coherent theory, whose objects are thought of as formulas and whose models are coherent functors from $\mc C$ to $\Set$. Moreover, if $\varphi,\psi$ are subobjects of $\chi$ in $\mc C$, then for any model $M$ of $\mc C$ we will write $M \models \varphi \vdash \psi$, if $M\varphi \subseteq M\psi$ as subsets of $M\chi$. For instance, given a mono $\varphi \inj \psi$, by viewing it as a subobject of $\psi$, $M \models \psi \vdash \varphi$ iff $M$ maps it to an isomorphism. More generally, we will also write $\mc C \models \psi \vdash \varphi$, if the mono $\varphi \inj \psi$ is an isomorphism in $\mc C$. \cite{johnstone2002sketches,maclane2012sheaves,caramello2018theories} are good references for categorical logic and topos theory.

We will also rely on some basic results in model theory. \citet{hodges1997shorter} gives a textbook account. However, since results in model theory are usually proven for classical first-order theories with one sort, we collect the suitable generalisation we need in Appendix~\ref{sec:some_results_in_model_theory}. 

For a monoid $M$ and $a\in M$, we write $aM \coloneq \scomp{ab}{b\in M} \subseteq M$ as the right ideal generated by $a$; similarly for the left ideal $Ma$. If $a$ is \emph{idempotent} in $M$, viz. $aa = 1$, then $aMa$ is a bi-ideal in $M$. For some basic theory of monoids, we refer the readers to~\citet{Steinberg2016}.

\section*{Acknowledgement}

We thank Morgan Rogers for carefully reading an earlier draft of this paper and for suggesting many improvements.

\section{The Structure of Categorical Theories}\label{sec:categoricaltheory}

In traditional model theory, a theory is \emph{categorical} if it only has one model up to isomorphism. This is suitable for classical first-order theories because when the language contains negation, any morphism between models, viz. any elementary embedding, will automatically be injective, since it needs to preserve both the interpretation of equality and inequality. However, in the context of coherent logic, this requirement turns out to be too strong. As a first observation, a retract of a model of a pretopos is also a model:

\begin{lemma}\label{lem:retractpreservestruth}
  Let $M,N : \mc C \to \Set$ be functors that $N$ is a retract of $M$ in $[\mc C,\Set]$. If $M$ preserves a (co)limit in $\mc C$, then so does $N$. In particular, if $\mc C$ is a pretopos and $\mc M$ is coherent, then so is $N$. In this case, for any mono $\psi \inj \varphi$ in $\mc C$,
  \[ M \models \varphi \vdash \psi \nt N \models \varphi \vdash \psi. \]
\end{lemma}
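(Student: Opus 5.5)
We fix a retraction $i : N \nt M$, $r : M \nt N$ with $r\circ i = \id_N$, and show the preservation claim one diagram at a time. The key observation is that a cone is a colimit (resp.\ limit) cone exactly when the comparison map is invertible. Being invertible is stable under retracts in the arrow category, and the comparison maps for $N$ form a retract of those for $M$.

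Concretely, let $D : I \to \mc C$ be a diagram with limit $L$ and cone $\lambda_j : L \to D_j$. Write $c_M : ML \to \lim_j MD_j$ and $c_N : NL \to \lim_j ND_j$ for the comparison maps in $\Set$. Naturality of $i$ and $r$ yields maps $i_L, r_L$ between $NL$ and $ML$. Functoriality of $\lim$ yields maps $\lim i_{D}$ and $\lim r_D$ between $\lim ND$ and $\lim MD$. These commute with $c_N, c_M$ and compose to the identity. So $c_N$ is a retract of $c_M$ in $\Set^{\to}$. A retract of an isomorphism is an isomorphism: if $c_M$ has inverse $c_M^{-1}$, then $r_L\circ c_M^{-1}\circ \lim i_D$ is a two-sided inverse of $c_N$. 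This is a routine diagram chase using $r\circ i=\id$. The colimit case is dual. One small care point is that only naturality is used, so $M$ and $N$ need not preserve anything a priori.

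For the coherent case, recall that a functor between pretopoi into $\Set$ is coherent iff it preserves finite limits, finite coproducts (disjoint), and coequalisers of kernel pairs, or equivalently images and finite unions. All of these are finite (co)limit conditions, so they transfer from $M$ to $N$ by the first part. Two of the conditions need a short remark. First, a map is a regular epi iff it is the coequaliser of its kernel pair. Second, unions of subobjects in a pretopos are images of coproducts. Both are therefore expressible through the preserved (co)limits.

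For the final claim, a mono $m : \psi \inj \varphi$ is sent to an isomorphism by $M$, and $Nm$ is a retract of $Mm$ in $\Set^{\to}$ via $i_\psi, i_\varphi, r_\psi, r_\varphi$. So $Nm$ is an isomorphism by the same retract argument, i.e.\ $N \models \varphi \vdash \psi$.

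The only real obstacle is bookkeeping in the retract-of-comparison-map step. Everything else is formal once we note that "preserves this (co)limit" means "this specific arrow is invertible".
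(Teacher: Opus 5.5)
Your proof is correct and takes essentially the same approach as the paper. The paper also uses naturality to exhibit the comparison map for $N$ as a retract of the one for $M$, writes down the same inverse $r \circ \alpha_M^{-1} \circ \lim s_i$, and gets the final claim because isomorphisms are closed under retracts. Your reduction of coherence to preserving finite limits, finite coproducts and coequalisers of kernel pairs just spells out the step the paper leaves as ``in particular''.
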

\begin{proof}
  Let $r : M \leftrightarrows N : s$ be the retract pair between $M$ and $N$. For any diagram $\varphi : I \to \mc C$, suppose $\lt_{i\in I}\varphi_i$ exists in $\mc C$. Then by naturality the following diagram commutes,
    \[\begin{tikzcd}
    	{\lt_{i\in I}M\varphi_i} & {M\lt_{i\in I}\varphi_i} \\
    	{\lt_{i\in I}N\varphi_i} & {N\lt_{i\in I}\varphi_i}
    	\arrow["{\lt_{i\in I}r_i}"', curve={height=6pt}, two heads, from=1-1, to=2-1]
    	\arrow["{\alpha_M}"', from=1-2, to=1-1]
    	\arrow["r"', curve={height=6pt}, two heads, from=1-2, to=2-2]
    	\arrow["{\lt_{i\in I}s_i}"', curve={height=6pt}, tail, from=2-1, to=1-1]
    	\arrow["s"', curve={height=6pt}, tail, from=2-2, to=1-2]
    	\arrow["{\alpha_N}", from=2-2, to=2-1]
    \end{tikzcd}\]
    Here $\alpha_M,\alpha_N$ are the canonical map induced by the limit. If $M$ preserves this limit, $\alpha_M$ has an inverse $\alpha_M\inv$. We may construct the inverse of $\alpha_N$ as follows,
    \[ \alpha_N\inv \coloneq r \circ \alpha_M\inv \circ \lt_{i\in I}s_i. \]
    We observe this is indeed an inverse,
    \begin{align*}
        \alpha_N\inv \circ \alpha_N 
        &= r \circ \alpha_M\inv \circ \lt_{i\in I}s_i \circ \alpha_N \\
        &= r \circ \alpha_M \circ \alpha_M\inv \circ s \\ 
        &= r \circ s \\
        &= \id
    \end{align*}
    and that 
    \begin{align*}
        \alpha_N \circ \alpha_N\inv
        &= \alpha_N \circ r \circ \alpha_M\inv \circ \lt_{i\in I}s_i \\
        &= \lt_{i\in I}r_i \circ \alpha_M \circ \alpha_M\inv \circ \lt_{i\in I}s_i \\ 
        &= \lt_{i\in I}r_i \circ \lt_{i\in I}s_i \\
        &= \id
    \end{align*}
    The case for colimits is completely similar. If $\mc C$ is a pretopos and $M$ is a coherent functor, recall that $M \models \varphi \vdash \psi$ iff $M\psi \inj M\varphi$ is an isomorphism. Since isomorphisms are closed under retracts, it follows that $N \models \varphi \vdash \psi$ as well.
\end{proof}

For a non-Boolean pretopos, in general there will be non-trivial retracts of models. Hence, the more appropriate notion of categoricity would be as follows:

\begin{definition}[Categorical coherent theory]\label{def:categorical}
  A pretopos $\mc C$ is \emph{categorical}, if $\mc C$ has one model up to retract, i.e. there exists a model $\star$ where every model is a retract of $\star$.
\end{definition}

For a categorical theory $\mc C$, $\star$ will be called the \emph{canonical model} of $\mc C$. However, this model will \emph{not} be canonical in the sense of having a universal property. We will see in~\cref{prop:canmodelunique} that indeed two canonical models of a categorical theory will be \emph{isomorphic} to each other, but such an isomorphism is by no means unique.

The aim of the remaining of this section is to establish various properties of categorical pretopoi, using elementary tools from model theory. Our first observation is that the canonical model $\star$ of $\mc C$ contains all the logical information of $\mc C$, in the sense that it is a \emph{conservative} model:

\begin{lemma}
  For a categorical pretopos $\mc C$, the canonical model $\star$ is conservative, i.e. $\star : \mc C \inj \Set$ is a conservative functor.
\end{lemma}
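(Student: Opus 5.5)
Conservativity for a coherent functor between pretopoi reduces to reflecting isomorphisms of subobjects. If $\star f$ is an isomorphism, then the image factorisation of $f$ and its kernel pair (which is preserved, since $\star$ is coherent) reduce the claim to monos. Concretely, $f$ is monic iff the diagonal into its kernel pair is an isomorphism, and it is epi iff its image mono is an isomorphism. So it suffices to show: for every mono $\psi \inj \varphi$ in $\mc C$, if $\star \models \varphi \vdash \psi$, then $\mc C \models \varphi \vdash \psi$.

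For this I use completeness for coherent logic, which is Deligne's theorem in its pretopos form. The set-valued models of a pretopos jointly reflect isomorphisms. Hence if $\psi \inj \varphi$ is not an isomorphism in $\mc C$, there is some model $N$ with $N\psi \subsetneq N\varphi$. By \cref{def:categorical}, $N$ is a retract of $\star$. By \cref{lem:retractpreservestruth}, $\star \models \varphi \vdash \psi$ then implies $N \models \varphi \vdash \psi$, which is a contradiction. So the mono is an isomorphism.

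The only delicate step is invoking completeness. A small pretopos always has enough set-valued models by Deligne's theorem applied to its (coherent) classifying topos, so nothing beyond the standard result is needed. The rest is the routine reduction from isomorphisms to monos described above.
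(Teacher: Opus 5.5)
Your proposal is correct and follows essentially the same route as the paper: completeness reduces the question to every model satisfying $\varphi \vdash \psi$, and the retract lemma transfers this from $\star$ to every model. The only difference is that you spell out the reduction from arbitrary morphisms to monos (via the kernel-pair diagonal and the image factorisation), a step the paper leaves implicit when it concludes conservativity from the mono case.
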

\begin{proof}
  By completeness (cf. Theorem~\ref{thm:complete}), a mono $\psi \inj \varphi$ is an isomorphism iff for all models $M$ of $\mc C$, $M \models \varphi \vdash \psi$. By Lemma~\ref{lem:retractpreservestruth}, this happens iff $\star \models \varphi \vdash \psi$, i.e. $\star\psi \inj \star\varphi$ is an isomorphism. Thus, $\star$ is conservative.
\end{proof}

Another important observation is that the canonical model will be \emph{finite}:

\begin{lemma}\label{lem:finitemodel}
  The canonical model $\star$ of a categorical pretopos $\mc C$ factors through finite sets,
  \[ \star : \mc C \inj \Fin \hook \Set. \]
\end{lemma}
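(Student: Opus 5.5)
Suppose some object $\varphi$ of $\mc C$ has $\star\varphi$ infinite; we will derive a contradiction with \cref{def:categorical}. The idea is an upward L\"owenheim--Skolem argument in the multi-sorted coherent setting, which presumably appears among the model-theoretic results collected in Appendix~\ref{sec:some_results_in_model_theory}. The key observation is that a retract of $\star$ admits a split mono into $\star$. So if $N$ is a retract of $\star$, then for every object $\varphi$ the component $N\varphi \to \star\varphi$ is injective, and hence $\geo{N\varphi} \le \geo{\star\varphi}$. It therefore suffices to produce a model $N$ with $\geo{N\varphi} > \geo{\star\varphi}$.

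To build $N$, enlarge the signature by fresh constants $c_\alpha$ of sort $\varphi$, for $\alpha < \kappa$, where $\kappa > \geo{\star\varphi}$. Add the axioms saying that the $c_\alpha$ are pairwise distinct. Coherent logic has no negation, so distinctness must be expressed differently. The cleanest way is to use the Boolean content available in $\Set$: alternatively, work with the ultraproduct construction directly, which avoids the problem.

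Concretely, take the ultrapower $\star^I/\mu$ for a suitable index set $I$ and ultrafilter $\mu$. This is again a coherent functor $\mc C \to \Set$, by the \L{}o\'s theorem for coherent functors, since filtered colimits of products preserve finite limits and finite colimits in $\Set$ appropriately; the paper presumably states this in the appendix. If $\star\varphi$ is infinite, choose $\mu$ to be a countably incomplete (non-principal) ultrafilter on $I = \N$. Then $(\star\varphi)^{\N}/\mu$ has cardinality $2^{\aleph_0}$, which is at least $\geo{\star\varphi}$ when $\star\varphi$ is countable. In general, iterate or choose $I$ and $\mu$ regular so that $\geo{(\star\varphi)^I/\mu} = \geo{\star\varphi}^{\geo I} > \geo{\star\varphi}$.

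This yields a model $N$ with $\geo{N\varphi} > \geo{\star\varphi}$. Since $N$ is a model, it is a retract of $\star$, so by the key observation $N\varphi \inj \star\varphi$, which is impossible. Hence every $\star\varphi$ is finite, and $\star$ factors through $\Fin$.

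The main obstacle is justifying the \L{}o\'s step, namely that ultraproducts of coherent models are again coherent models, together with the cardinality computation for the ultrapower. I expect to invoke the appendix (compactness or ultraproducts) for both. If only compactness is available there, I would instead use the canonical Henkin-style construction: add $\kappa$ constants and the coherent sequents $c_\alpha = c_\beta \vdash \bot$, interpreted in the pretopos obtained by adjoining these to $\mc C$. This theory is finitely satisfiable in $\star$ because $\star\varphi$ is infinite, so compactness gives a model $N$ with $\geo{N\varphi} \ge \kappa$.
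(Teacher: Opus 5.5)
Your proof is correct, and its skeleton is the paper's. A section $s\colon N\to\star$ makes every component $N\varphi\to\star\varphi$ injective, so it suffices to exhibit a model $N$ with $|N\varphi|>|\star\varphi|$. The paper gets this model by citing its L\"owenheim--Skolem theorem (Theorem~\ref{thm:ls}). That theorem's proof sketch is exactly your final fallback: compactness applied to $\kappa$ new constants of sort $\varphi$ declared pairwise distinct. You note that $c_\alpha = c_\beta \vdash \bot$ is already a coherent sequent. This makes your earlier worry about negation moot, and it is slightly cleaner than the paper's passage through a classical theory $T_{\mc C}$.

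Your primary ultrapower route is a genuinely different way to produce the large model. It avoids building an extended theory, but it costs more set theory, and two steps in your write-up need fixing.

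\begin{enumerate}
\item \L{}o\'s for coherent models does not follow from ``filtered colimits of products'' alone. A reduced product over an arbitrary filter preserves finite limits and surjections, but not finite unions or disjoint coproducts. It is the ultrafilter property (if $U\cup V\in\mu$ then $U\in\mu$ or $V\in\mu$) that secures these.
\item ``Iterate'' does not work. A non-principal ultrapower over $\N$ of a set of size $\lambda$ has size $\lambda^{\aleph_0}$. If $\lambda^{\aleph_0}=\lambda$ (e.g.\ $\lambda=2^{\aleph_0}$), no number of countable ultrapowers gets past $\lambda$. You really need a regular ultrafilter on a set of size $\lambda=|\star\varphi|$. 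That gives an ultrapower of size $\lambda^{\lambda}=2^{\lambda}>\lambda$.
\end{enumerate}

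With these repairs the ultrapower route is a valid alternative, but the compactness route is shorter and is the one the paper uses.
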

\begin{proof}
  By L\"owenheim–Skolem theorem (cf. Theorem~\ref{thm:ls}), if $\star(\varphi)$ for some $\varphi\in\mc C$ is infinite, then there exist models $M$ of $\mc C$ such that $M\varphi$ has arbitrarily large cardinality. In particular, not all such models can be retracts of $\star$ since the cardinality of $\star\varphi$ is bounded. Hence, $\star(\varphi)$ must be a finite set for all $\varphi\in\mc C$.
\end{proof}

\begin{corollary}\label{categoricaltheoryhasfinitemodel}
    For a categorical pretopos, all of its models are finite.
\end{corollary}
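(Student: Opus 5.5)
The corollary should follow directly from \cref{lem:finitemodel} together with the definition of categoricity. Let $N$ be any model of the categorical pretopos $\mc C$. By \cref{def:categorical}, $N$ is a retract of the canonical model $\star$ in $[\mc C,\Set]$. So we have natural transformations $s : N \nt \star$ and $r : \star \nt N$ with $r\circ s = \id_N$.

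The plan is to evaluate these at each object. For any object $\varphi\in\mc C$, the component $s_\varphi : N\varphi \to \star\varphi$ has a left inverse $r_\varphi$, hence is an injection of sets. Equivalently, $r_\varphi : \star\varphi \to N\varphi$ is a surjection. Either way, $|N\varphi| \le |\star\varphi|$.

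By \cref{lem:finitemodel}, $\star\varphi$ is a finite set, so $N\varphi$ is finite as well. Since $\varphi$ was arbitrary, $N$ factors through $\Fin \hook \Set$, which is what "finite model" means here: every sort, and hence every definable set, is interpreted as a finite set.

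I expect no real obstacle. The only point needing care is which notion of "finite model" is intended for a multi-sorted coherent theory. I take it to mean finiteness of $N\varphi$ for every object $\varphi$, matching the form of \cref{lem:finitemodel}. With that reading, the argument is purely the componentwise split-mono observation and does not need Löwenheim–Skolem again.
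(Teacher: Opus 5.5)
Your proof is correct and takes essentially the same route as the paper. The paper's one-line proof (``retracts of finite models are by definition finite'') is exactly your componentwise observation: each $s_\varphi : N\varphi \to \star\varphi$ is a split mono into a set that is finite by \cref{lem:finitemodel}.
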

\begin{proof}
    Retracts of finite models are by definition finite.
\end{proof}

\begin{remark}[Theories with only finite models]
    It is not true that if a pretopos $\mc C$ only has finite models, then $\mc C$ will be categorical. For instance, if $\mc C$ is induced by a propositional theory, i.e. $\mc C$ is the pretopos completion of a distributive lattice, then models of $\mc C$ will also be finite. 
\end{remark}

As an immediate consequence, we show that the canonical model of a categorical theory $\mc C$ is unique up to isomorphisms (though not up to unique isomorphism):

\begin{proposition}\label{prop:canmodelunique}
  Let $\mc C$ be a categorical pretopos. If there are two models $M,N$ of $\mc C$ such that all models are both retracts of $M$ and of $N$, then $M,N$ are isomorphic.
\end{proposition}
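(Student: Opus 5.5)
Since $N$ is a model, it is a retract of $M$; likewise $M$ is a retract of $N$. So we have natural transformations $s : N \nt M$, $r : M \nt N$ with $rs = \id_N$, and $s' : M \nt N$, $r' : N \nt M$ with $r's' = \id_M$. The plan is to show that the composite $s' \circ r : M \nt N$, or more simply $r$ itself, is a natural isomorphism. We argue componentwise, using the finiteness of models from \cref{categoricaltheoryhasfinitemodel}. An isomorphism in $[\mc C,\Set]$ is just a componentwise bijection, so it suffices to check that each component is a bijection.

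Fix $\varphi \in \mc C$. The retractions give that $s_\varphi : N\varphi \to M\varphi$ is injective, and so is $s'_\varphi : M\varphi \to N\varphi$. Hence $|N\varphi| \le |M\varphi| \le |N\varphi|$. Since both sets are finite by \cref{categoricaltheoryhasfinitemodel}, we get $|M\varphi| = |N\varphi|$. An injection between finite sets of equal cardinality is a bijection, so $s_\varphi$ is a bijection. Therefore $s : N \nt M$ is a componentwise bijection and hence a natural isomorphism, with inverse $r$, since $rs=\id$ and $s$ is invertible.

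The only point needing care is that finiteness must hold for \emph{every} $\varphi$, not merely for $\star$. This is exactly what \cref{lem:finitemodel} and \cref{categoricaltheoryhasfinitemodel} provide, since $M$ and $N$ are themselves models of $\mc C$. Beyond that, the argument is routine. The essential ingredient is the Dedekind-finiteness of finite sets, which fails for infinite models; this is why the isomorphism obtained depends on the choice of $s$ and is not unique.
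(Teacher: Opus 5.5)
Your proof is correct and is essentially the paper's argument: sections are pointwise injective, all models are finite by \cref{categoricaltheoryhasfinitemodel}, and an injection between finite sets of the same size is a bijection. The paper instead composes the two sections into an injective endomorphism $M \nt M$, whereas you compare cardinalities and get directly that $s$ is invertible with inverse $r$, which is slightly cleaner. Your closing remark is off but does not affect the proof: the isomorphism is non-unique because $M$ has nontrivial automorphisms, not because Dedekind-finiteness fails for infinite models.
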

\begin{proof}
  By assumption, $M$ is a retract of $N$, and $N$ is a retract of $M$. Composing the two sections we get an endomorphism of $M \inj M$, which is pointwise injective. For any $\varphi$, this induces an injection $M\varphi \inj M\varphi$. But since $M\varphi$ is finite by~\cref{lem:finitemodel}, and any injective endomorphism on a finite set must be bijective, the map $M \inj M$ must be an isomorphism. Similarly for $N$. Thus, $M,N$ are isomorphic.
\end{proof}

Using these results, we show that categorical pretopoi have many special properties:

\begin{proposition}\label{prop:strucofcat}
  Let $\mc C$ be a categorical pretopos. Then we have:
  \begin{enumerate}
    \item It is two valued, i.e. $\sub_{\mc C}(1) \cong 2$;
    \item It is of finite type, i.e. $\sub_{\mc C}(\varphi)$ is finite for any $\varphi\in\mc C$;
    \item It is locally finite, i.e. $\mc C(\varphi,\psi)$ is a finite set for any $\varphi,\psi\in\mc C$;
    \item It admits finite colimits, and $\star$ preserves them.
  \end{enumerate}
\end{proposition}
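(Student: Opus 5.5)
For items (1)--(3) I will use only two facts established above: the canonical model $\star$ is conservative, and by \cref{lem:finitemodel} it lands in $\Fin$. The key observation is that a conservative coherent functor reflects equality of subobjects. Indeed, $\star$ preserves finite meets, so if $\star A = \star B$ for $A,B \in \sub_{\mc C}(\varphi)$, then $\star(A\wedge B) = \star A$. Conservativity then turns the monos $A\wedge B \inj A$ and $A \wedge B \inj B$ into isomorphisms, so $A = B$. Consequently, for every $\varphi$ the induced map $\sub_{\mc C}(\varphi) \to \mc P(\star\varphi)$ is an injective lattice homomorphism.

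For (1), apply this with $\varphi = 1$. Since $\star 1 = 1$, the lattice $\sub_{\mc C}(1)$ embeds into $\mc P(1) \cong 2$. It is nontrivial because $\star 0 = \emptyset \neq 1 = \star 1$, so $0 \ne 1$ in $\mc C$. Hence $\sub_{\mc C}(1) \cong 2$. For (2), $\sub_{\mc C}(\varphi)$ embeds into $\mc P(\star\varphi)$, and this power set is finite because $\star\varphi$ is finite. For (3), I send a morphism $f : \varphi \to \psi$ to its graph, viewed as a subobject of $\varphi\times\psi$. This assignment is injective, since $f$ is recovered as the second projection composed with the inverse of the isomorphism from the graph to $\varphi$. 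So $\mc C(\varphi,\psi)$ injects into the finite set $\sub_{\mc C}(\varphi\times\psi)$, using (2).

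Item (4) is the main point. A pretopos already has finite coproducts, including the initial object, and $\star$ preserves them because it is coherent. It therefore suffices to construct coequalisers of arbitrary parallel pairs $f,g : \varphi \rightrightarrows \psi$ and show $\star$ preserves them. A pretopos only guarantees quotients of equivalence relations, and the equivalence relation generated by a relation normally requires a countable union, which is unavailable. I will use finiteness to avoid this. Let $R_0 \in \sub_{\mc C}(\psi\times\psi)$ be the join of the image of $\langle f,g\rangle$, the image of $\langle g,f\rangle$, and the diagonal. Define $R_{n+1} \coloneq R_n \circ R_n$, where relational composition is formed by a pullback followed by an image. This gives an ascending chain $R_0 \le R_1 \le \cdots$, since each $R_n$ contains the diagonal. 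By (2) the chain stabilises at some $R_N$.

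The stable stage $R_N$ is reflexive, symmetric (by induction, since the converse of $R_0$ is $R_0$) and transitive, so it is an equivalence relation. Its quotient $\psi \surj \psi/R_N$ exists in the pretopos. It is the coequaliser of $f,g$, because any map $h$ with $hf = hg$ has kernel pair containing $R_0$, hence containing every $R_n$, and so $h$ factors uniquely through the effective quotient. For preservation: $\star$ preserves images, pullbacks, joins and diagonals, so $\star R_n$ is exactly the $n$-th stage of the same construction in $\Set$. The set-level chain is therefore constant from stage $N$ on, and $\star R_N$ is the equivalence relation on $\star\psi$ generated by the pair $\star f,\star g$. Finally $\star$ preserves quotients of equivalence relations, so $\star(\psi/R_N) = \star\psi/\star R_N$, which is the coequaliser in $\Set$. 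I expect the only delicate point to be checking that stabilisation in $\mc C$ transfers to $\Set$, and this holds simply because $\star$ commutes with each step of the construction.
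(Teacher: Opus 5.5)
Your proof is correct. For (1) and (2) it is the paper's argument, with the reflection of subobject equality made explicit. Parts (3) and (4) go a different way.

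For (3), the paper uses that a conservative coherent functor is faithful, so $\mc C(\varphi,\psi)$ injects into $\Fin(\star\varphi,\star\psi)$. Your graph embedding reduces (3) to (2) instead, and is just as short.

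For (4), the paper reduces to pushouts and handles pushouts of monos by citing adhesivity of pretopoi. For a pushout of an epi $q$ along $f : \varphi \to \chi$, it takes the \emph{smallest} equivalence relation $Q$ on $\chi$ containing the image of the kernel pair of $q$; this exists because $\sub_{\mc C}(\chi\times\chi)$ is finite. You instead build coequalisers of arbitrary parallel pairs directly and combine them with finite coproducts. So you need neither adhesivity nor the epi--mono splitting.

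The underlying idea is the same, but your explicit chain $R_0\le R_1\le\cdots$ does real work in the preservation step. If $Q$ is only described as the least equivalence relation in $\sub_{\mc C}(\chi\times\chi)$, it is not immediate that $\star Q$ is the least equivalence relation on $\star\chi$ containing the relevant image. A priori, not every subset of $\star\chi\times\star\chi$ is of the form $\star S$. You observe that $\star$ commutes with each step of the iteration, so stabilisation at stage $N$ in $\mc C$ forces stabilisation at stage $N$ in $\Set$. That is exactly what justifies the paper's ``evident'', and your stabilised $R_N$ is the paper's $Q$.

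What the paper's route buys in return is mainly brevity, since it delegates the mono case to a known structural property of pretopoi.
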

\begin{proof}
  1: Since $\star$ is conservative, $\sub(1)$ must be a subposet of $\mc P(1) \cong 2$. Since $\mc C$ has a model, it cannot be degenerate, hence $\sub_{\mc C}(1) \cong 2$. 

  2: Similarly, $\sub_{\mc C}(\varphi)$ must be a subposet of $\mc P(\star\varphi)$, which is finite.

  3: A conservative coherent functor must also be \emph{faithful}, thus $\mc C(\varphi,\psi)$ is a subset of $\Fin(\star\varphi,\star\psi)$, which is finite.

  4: It suffices to show $\mc C$ has pushout and they are preserved by $\star$. Since any pretopos is \emph{adhesive} (cf.~\cite{garner2012remarks}), pushouts of monos always exist and are preserved by any coherent functor. Hence, it suffices to show pushouts of epis exist. Consider an epi $q : \varphi \surj \psi$ and a map $f : \varphi \to \chi$. Let $R \rightrightarrows \varphi$ be the kernel pair of $q$. Now since $\sub(\chi \times \chi)$ is finite, there exists a smallest equivalence relation $Q$ on $\chi$ such that the composite $R \inj \varphi \times \varphi \to \chi \times \chi$ factors through $Q$. It is then routine to show that the effective coequaliser of $Q \rightrightarrows \chi$ is the pushout of $q$ along $f$. Furthermore, from this description it is evident that such pushouts are preserved by coherent functors, since they preserve finite limits, finite unions of subobjects, and effective quotients.
\end{proof}

\begin{remark}[Categorical pretopoi as semi-Galois theories]
  \citet{URAMOTO2025107863} has introduced a notion of \emph{semi-Galois category}, which is a pair $(\mc C, F)$ where $\mc C$ is a category and $F : \mc C \to \Fin$ is a functor, such that: 
  \begin{itemize}
      \item $\mc C$ has finite limits, finite colimits, and image factorisation;
      \item $F$ preserves finite limits and finite colimits, and is conservative;
  \end{itemize}
  By~\cref{prop:strucofcat}, for any categorical pretopos $\mc C$, $(\mc C,\star)$ forms a semi-Galois category in this sense.
\end{remark}

\begin{remark}[Categoricity in model theory]
  In model theory, a better studied notion of categoricity is that of \emph{$\kappa$-categoricity} for some infinite cardinal $\kappa$. A theory is $\kappa$-categorical if up to isomorphism it only has one model of cardinality $\kappa$. By the Ryll–Nardzewski theorem, $\omega$-categorical theories also have finite types (cf.~\cite{hodges1997shorter}),. In the future we will study these more general theories from a duality perspective. 
\end{remark}

We can indeed characterise the entire category of models of a categorical theory. Recall that any small category $\mc C$ has a \emph{Cauchy completion} $\ov{\mc C}$, which is the universal category where every idempotent in $\mc C$ splits. Concretely, $\ov{\mc C}$ can be identified as the full subcategory of $\psh(\mc C)$ whose objects are retracts of representables. This way, we will show that a pretopos is categorical iff its category of models is the Cauchy completion of the classifying category of a monoid.

\begin{definition}[Classifying category of a monoid]
    For any monoid $M$, we use $BM$ to denote its \emph{classifying category}, which is a one object category with endomorphism monoid $M$, where the composition is given by the product in $M$. The unique object in $BM$ will be denoted as $\star$. 
\end{definition}

\begin{proposition}\label{prop:monoiduptocauchy}
  A pretopos $\mc C$ is categorical iff there is a monoid $\mc M$ that $\Mod(\mc C) \simeq \ov{B\mc M}$.
\end{proposition}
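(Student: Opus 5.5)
For the forward direction I will take $\mc M \coloneq \mathrm{End}(\star)$, the endomorphism monoid of the canonical model in $\Mod(\mc C)$. Then $B\mc M$ is the full subcategory of $\Mod(\mc C)$ on the single object $\star$.

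The strategy is to show that $\Mod(\mc C)$ is Cauchy complete and that every object is a retract of an object of $B\mc M$. The general fact that a Cauchy complete category in which every object is a retract of objects of a full subcategory $\mc A$ is equivalent to $\ov{\mc A}$ then gives $\Mod(\mc C)\simeq\ov{B\mc M}$.

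Concretely, $\ov{B\mc M}$ is the full subcategory of $\psh(B\mc M)$ on retracts of the representable. I will use the restricted Yoneda functor $\Mod(\mc C)\to\psh(B\mc M)$, $N\mapsto \Mod(\mc C)(\star,N)$. It is fully faithful on retracts of $\star$: a morphism between retracts of $\star$ is determined by its composites with the sections and retractions, and this reduces to Yoneda for $\star$ itself. Every model is a retract of $\star$ by \cref{def:categorical}, so it lands in retracts of the representable. For essential surjectivity I need that idempotents on $\star$ split in $\Mod(\mc C)$. An idempotent $e:\star\to\star$ splits in $[\mc C,\Set]$ pointwise as its image, and by \cref{lem:retractpreservestruth} the splitting is again a coherent functor, hence a model. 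Thus every retract of the representable is hit.

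For the converse, suppose $\Mod(\mc C)\simeq\ov{B\mc M}$. In $\ov{B\mc M}$ every object is a retract of the representable object $\star$, so transporting along the equivalence gives a model $M$ corresponding to $\star$ of which every model is a retract. Hence $\mc C$ is categorical by \cref{def:categorical}.

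The only genuinely substantive step is closure of $\Mod(\mc C)$ under splitting of idempotents, which is exactly \cref{lem:retractpreservestruth}. One subtlety needs care: the statement only asserts existence of some monoid, so no finiteness of $\mc M$ is needed. The converse direction also uses no finiteness, so I will not invoke \cref{lem:finitemodel} at all.
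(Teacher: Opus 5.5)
Your proof is correct and is essentially the paper's argument: both rest on $\Mod(\mc C)$ being Cauchy complete via \cref{lem:retractpreservestruth} and on every model being a retract of $\star$, and both read the converse directly off the definition. The only difference is cosmetic. You build the comparison functor in the direction $\Mod(\mc C)\to\psh(B\mc M)$ by restricted Yoneda, whereas the paper extends the full inclusion $B\mc M\hookrightarrow\Mod(\mc C)$ along the universal property of Cauchy completion to a fully faithful $\ov{B\mc M}\hookrightarrow\Mod(\mc C)$, and then uses categoricity for essential surjectivity.
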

\begin{proof}
  By construction, every object in $\ov{B\mc M}$ is a retract of $\star$. Thus if $\Mod(\mc C) \simeq \ov{B\mc M}$, then $\mc C$ will be categorical by definition. On the other hand, if $\mc C$ is categorical, let $\mc M$ be the endomorphism monoid of its canonical point $\star$, 
  \[ \mc M \coloneq \Mod(\mc C)(\star,\star). \]
  $\Mod(\mc C)$ is Cauchy complete by~\cref{lem:retractpreservestruth}, thus the fully faithful inclusion $B\mc M \hook \Mod(\mc C)$ induces a fully faithful embedding $\ov{B\mc M} \hook \Mod(\mc C)$. Since $\mc C$ is categorical, this functor is also essentially surjective, thus we have an equivalence $\ov{B\mc M} \simeq \Mod(\mc C)$.
\end{proof}

\begin{remark}[A concrete description of the Cauchy completion of a monoid]\label{rem:Cauchyofmonoid}
    For any monoid $M$, the Cauchy completion $\ov{BM}$ of its classifying category can be concretely described as follows:
    \begin{itemize}
        \item Objects are of the form $\star_e$ where $e\in M$ is an idempotent, i.e. $ee = e$;
        \item A morphism $f : \star_e \to \star_d$ is an element $f\in M$ such that $fe = f = df$. 
    \end{itemize}
    The composition of morphisms in $\ov{BM}$ is inherited from the product of $M$, and $BM \hook \ov{BM}$ is included in $\ov{BM}$ by sending $\star$ to $\star_1$. For any idempotent $e$, $\ov{BM}(\star_e,\star_e) \cong eMe$.
\end{remark}

As a special case, the category of models of a \emph{Boolean} categorical pretopos will be equivalent to the classifying category of a group:

\begin{corollary}\label{cor:booleancategorical}
  For a Boolean categorical pretopos $\mc B$, $\Mod(\mc B) \simeq B\mc G$ for a group $\mc G$.
\end{corollary}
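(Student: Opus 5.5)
We will deduce this from \cref{prop:monoiduptocauchy} together with the standard model-theoretic fact that in a Boolean pretopos every morphism of models is injective. Let $\star$ be the canonical model of $\mc B$ and put $\mc G \coloneq \Mod(\mc B)(\star,\star)$. By \cref{prop:monoiduptocauchy} we have $\Mod(\mc B)\simeq\ov{B\mc G}$. The claim then splits into two parts. First, $\mc G$ is actually a group. Second, $B\mc G$ is already Cauchy complete, so that $\ov{B\mc G}\simeq B\mc G$.

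\textbf{Step 1: every homomorphism of models is pointwise injective.} Let $h : M\to N$ be a natural transformation between coherent functors $\mc B\to\Set$, and fix $\varphi\in\mc B$. The diagonal $\Delta\inj\varphi\times\varphi$ has a complement $\neg\Delta$ in $\sub(\varphi\times\varphi)$, because $\mc B$ is Boolean. Since $M$ is coherent, it preserves finite meets and joins of subobjects, so $M(\neg\Delta)$ is the complement of the diagonal of $M\varphi$, i.e. the set of pairs $(a,b)$ with $a\neq b$. Naturality of $h$ applied to the mono $\neg\Delta\inj\varphi\times\varphi$ shows that $h_\varphi\times h_\varphi$ sends $M(\neg\Delta)$ into $N(\neg\Delta)$, which is disjoint from the diagonal of $N\varphi$. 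Hence $h_\varphi$ is injective.

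\textbf{Step 2: $\mc G$ is a group.} Take $g\in\mc G$. By Step 1, each component $g_\varphi:\star\varphi\to\star\varphi$ is an injective endomorphism. By \cref{lem:finitemodel} the set $\star\varphi$ is finite, so $g_\varphi$ is a bijection. The pointwise inverses are automatically natural, so $g$ is an isomorphism in $\Mod(\mc B)$ and has an inverse in $\mc G$. This is the same finiteness trick as in \cref{prop:canmodelunique}.

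\textbf{Step 3: idempotents are trivial.} In a group the only idempotent is $1$, since $ee=e$ gives $e=1$ after multiplying by $e\inv$. By the description in \cref{rem:Cauchyofmonoid}, every object $\star_e$ of $\ov{B\mc G}$ is therefore $\star_1$. So $\ov{B\mc G}=B\mc G$, and $\Mod(\mc B)\simeq B\mc G$.

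Alternatively, we can argue directly at the level of models. If $N$ is a retract of $\star$ with $rs=\id$, then $s$ is an isomorphism by Step 2, so $N\cong\star$.

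The only point needing care is Step 1. We must check that coherent functors preserve complements of subobjects. This holds because complements in a Boolean pretopos are characterised by a disjoint union $\Delta\vee\neg\Delta=\top$ together with $\Delta\wedge\neg\Delta=\bot$, and coherent functors preserve both finite unions and finite intersections, including the empty ones.
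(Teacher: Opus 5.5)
Your proposal is correct and follows the paper's proof essentially step for step. Both arguments show that morphisms of models are injective in the Boolean case, that finiteness of $\star$ then makes the endomorphism monoid a group $\mc G$, and that the triviality of idempotents in a group makes $B\mc G$ Cauchy complete, so that \cref{prop:monoiduptocauchy} gives $\Mod(\mc B)\simeq B\mc G$; your Step 1 adds a proof of the injectivity claim, via preservation of the complement of the diagonal, which the paper only asserts.
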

\begin{proof}
  As mentioned at the start of~\cref{sec:categoricaltheory}, if $\mc B$ is Boolean then all morphisms between models of $\mc B$ must be \emph{injective}. Since the canonical model $\star : \mc B \inj \Fin$ is finite, any injective endomorphism between finite sets is automatically a bijection, which implies the endomorphism monoid of $\star$ in fact forms a group $\mc G$. Furthermore, $B\mc G$ is already Cauchy complete, since an idempotent in $\mc G$ is necessarily the identity. This way, Proposition~\ref{prop:monoiduptocauchy} implies we have an equivalence $\Mod(\mc B) \simeq \ov{B\mc G} \simeq B\mc G$.
\end{proof}

\begin{remark}[Theories with a groupoid of models]
    In~\cref{sec:duality_for_categorical_theories}, by the duality result we will see that~\cref{cor:booleancategorical} in fact describes an if and only if condition, i.e. a categorical pretopos is Boolean iff its category of models is equivalent to a group. This raises an interesting question of when the category of models of a coherent theory is a \emph{group(oid)}. We comment that if $\Mod(\mc C)$ is a groupoid for a pretopos $\mc C$, then again by L\"owenheim–Skolem all models of $\mc C$ must be finite, since if a model $M$ of $\mc C$ is infinite then there exists an embedding $M \inj M'$ of $M$ into a model of strictly larger cardinality. In the context of topos theory, this corresponds to \emph{grouplike} topoi (cf.~\cite[C3.6]{johnstone2002sketches}); they have been briefly studied in~\citet{di2025logic}.
\end{remark}

We end this section by exhibiting an example of a categorical theory. For a one sorted classical first-order theory, since finite cardinality is definable in this language, all models will have a fixed finite cardinality $n$. Thus the automorphism group of the canonical model will be a subgroup of the permutation group $S_n$, which is discrete. The following examples allow one to see how more interesting structures can arise when one consider theories with infinitely many sorts:

\begin{example}\label{exm:simple}
  Let $N$ be a classical first-order theory with countably many sorts $\set{A_n}_{n\in\N}$. For each sort $A_n$, $N$ has an axiom stating that the sort has exactly $n$-many elements:
  \[ \ex{x_1,\cdots,x_n}{A_n} \prth{\bigwedge_{i\neq j}x_i \neq x_j} \wedge \fa x{A_n} \bigvee_{i\le n}x = x_i. \]
  The theory $N$ has exactly one model $\star$ up to isomorphism, where $\star$ interprets the sort $A_n$ to an $n$-element set. However, the automorphism group of this model is highly non-trivial. Since any permutation of an $n$-element set induces an isomorphism on the interpretation at $A_n$, the automorphism group of $\star$ will be the following product,
  \[ \mr{Aut}(\star) \cong \prod_{n\in\N} S_n, \]
  Since each $S_n$ is a finite group, the product $\mr{Aut}(M)$ has a natural \emph{profinite topology}, which is the product topology, making it into a profinite group.
\end{example}

We have already shown that a categorical pretopos $\mc C$ uniquely determines a monoid $\mc M$, such that $\Mod(\mc C) \simeq \ov{B\mc M}$. However, just like the case for propositional logic, the categorical structure of models does not provide a full reconstruction for the logical theory. In the next section we will see that the monoid $\mc M$ is naturally equipped with a \emph{profinite topology}. The duality result then shows $\mc C$ is determined by $\mc M$ as a profinite monoid.

\section{The Profinite Monoid for a Categorical Theory}\label{sec:profmonoidofcattheory}

For this section, let us fix a categorical pretopos $\mc C$ with the canonical model $\star$, whose endomorphism monoid is $\mc M$. The goal of this section is to observe that $\mc M$ is equipped with a natural profinite topology, and in fact the construction $\mc C \mapsto \mc M$ gives us a natural 2-functor
\[ \PTc\op \to \ProMon, \]
from the dual of the 2-category of categorical pretopoi, to the 2-category of profinite monoids which we will define shortly.

The profinite topology on $\mc M$ can be deduced from the finiteness of the canonical model $\star$. Let us first consider the following product space
\[ \mc S \coloneq \prod_{\varphi\in\mc C}\Fin(\star\varphi,\star\varphi). \]
An element $f$ in $\mc S$ is a family of functions $(f_\varphi)_{\varphi\in\mc C}$ indexed by objects in $\mc C$. $\mc S$ has a natural product topology which is profinite, i.e. compact Hausdorff and totally disconnected, since it is a product of finite discrete spaces. We will denote the projection maps as 
\[ \pi_\varphi : \mc S \surj \Fin(\star\varphi,\star\varphi) \] 
for any object $\varphi\in\mc C$. For any subset $U \subseteq \prod_{i\in n}\Fin(\star\varphi_i,\star\varphi_i)$, its inverse image under the projection maps by construction is clopen,
\[ \mc S_{U} \coloneq (\prod_{i\in n}\varphi_i)\inv(U) = \scomp{f\in\mc S}{(f_{\varphi_i})_{i\in n}\in U}. \]
In particular, for any function $g : \star\varphi \to \star\varphi$ for some $\varphi\in\mc C$, $\mc S_g = \mc S_{\set g}$ is clopen in $\mc S$, and these subspaces form a subbasis of the profinite topology on $\mc S$.

In this case, there is a natural inclusion $\mc M \subseteq \mc S$ of the endomorphism monoid $\mc M$ of $\star$ into the product space $\mc S$, where $\mc M$ carves out those family of functions satisfying the naturality condition for $\mc C$. This makes $\mc M$ into a profinite monoid:

\begin{lemma}\label{prop:profinitemonoid}
  The inclusion $\mc M \subseteq \mc S$ identifies $\mc M$ as a closed subspace of $\mc S$, thus $\mc M$ also has a profinite topology. In this topology, $\mc M$ is a profinite monoid.
\end{lemma}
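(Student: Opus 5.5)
We will show that $\mc M$ is an intersection of clopen subsets of $\mc S$. It is then closed, and being a closed subspace of a profinite space it is itself profinite. Finally we will check that multiplication is continuous.

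For the closedness, recall that an element $f=(f_\varphi)_{\varphi\in\mc C}\in\mc S$ lies in $\mc M$ exactly when it is natural. That is, for every morphism $h:\varphi\to\psi$ in $\mc C$ we need
\[ \star h\circ f_\varphi = f_\psi\circ \star h. \]
For a fixed $h$, this condition involves only the two coordinates $f_\varphi$ and $f_\psi$. So it defines the set
\[ N_h \coloneq \mc S_{U_h}, \qquad U_h \coloneq \scomp{(g,g')\in\Fin(\star\varphi,\star\varphi)\times\Fin(\star\psi,\star\psi)}{\star h\circ g = g'\circ\star h}. \]
This set is clopen, since it is the inverse image under the projection to $\Fin(\star\varphi,\star\varphi)\times\Fin(\star\psi,\star\psi)$ of a subset of a finite discrete space. (When $\varphi=\psi$ we instead project onto the single factor.) Hence
\[ \mc M=\bigcap_{h}N_h \]
is an intersection of clopen sets, and so it is closed. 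A closed subspace of a compact Hausdorff, totally disconnected space is again compact Hausdorff and totally disconnected, so $\mc M$ is profinite in the subspace topology. The finiteness of each $\star\varphi$, which makes the factors finite and discrete, is supplied by \cref{lem:finitemodel}.

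For the monoid structure, multiplication in $\mc M$ is composition of natural transformations, which is computed componentwise: $(fg)_\varphi=f_\varphi\circ g_\varphi$. This is the restriction to $\mc M\times\mc M$ of the componentwise composition map $\mc S\times\mc S\to\mc S$. To see that this map is continuous, it suffices to check that each composite $\pi_\varphi\circ(-\cdot-)$ is continuous, by the universal property of the product topology. This composite factors as
\[ \mc S\times\mc S\xrightarrow{\pi_\varphi\times\pi_\varphi}\Fin(\star\varphi,\star\varphi)^2\xrightarrow{\circ}\Fin(\star\varphi,\star\varphi). \]
The first map is continuous, and the second is any function between finite discrete spaces, hence automatically continuous. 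So multiplication on $\mc M$ is continuous, and the unit $\id_\star$ is simply a point of $\mc M$. Therefore $\mc M$ is a compact Hausdorff, totally disconnected topological monoid.

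The one place where more care is needed is the convention for ``profinite monoid''. If the paper intends a profinite monoid to be a cofiltered limit of finite discrete monoids, rather than a compact, Hausdorff, totally disconnected topological monoid, we must additionally exhibit $\mc M$ as such a limit. To do this, for each finite set $F$ of objects of $\mc C$ we take $\mc M_F$ to be the image of $\mc M$ in $\prod_{\varphi\in F}\Fin(\star\varphi,\star\varphi)$. Each $\mc M_F$ is a finite monoid, because the projection is a monoid homomorphism. The $\mc M_F$ form a cofiltered system indexed by finite sets under inclusion. The canonical continuous bijection from $\mc M$ onto their limit is a homeomorphism by compactness: it is injective because points of $\mc S$ are determined by their coordinates, and surjective by closedness of $\mc M$ together with compactness of the fibres. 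Alternatively, we could cite the standard theorem that compact, Hausdorff, totally disconnected monoids are profinite (see Steinberg). This identification is the step we expect to be the main obstacle; the rest of the argument is routine.
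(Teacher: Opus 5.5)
Your proof is correct and follows the paper's argument. Closedness comes from writing $\mc M$ as the intersection of the clopen naturality conditions, one for each morphism of $\mc C$, and continuity of composition reduces to the finiteness of each $\Fin(\star\varphi,\star\varphi)$; you phrase this via the universal property of the product topology, where the paper writes $\circ^{-1}(\mc M_g)$ as the finite union $\bigcup_{u\circ v=g}\mc M_u\times\mc M_v$. Your extra check that $\mc M$ is the cofiltered limit of its finite images $\mc M_F$ is also correct, and it makes explicit a point the paper leaves implicit, since the paper later relies on the cofiltered-limit description of profinite monoids, citing Johnstone.
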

\begin{proof}
    An element $f$ in $\mc M$ by definition is an element in $\mc S$ satisfying the naturality condition for all morphisms in $\mc C$. The naturality condition on a morphism $\alpha : \varphi \to \psi$ in $\mc C$ is indeed \emph{local} in the sense that it only involves $\varphi$ and $\psi$. Thus, we may define the clopen subspace $\mc S_\alpha$ as follows,
    \[ \mc S_\alpha \coloneq \scomp{f\in\mc S}{\star\alpha \circ f_\varphi = f_\psi \circ \star\alpha}, \]
    and $\mc M = \bigcap_{\alpha\in\mc C_1}\mc S_\alpha$, where $\mc C_1$ denotes the set of morphisms in $\mc C$, will thus be a \emph{closed} subspace of $\mc S$, hence has a profinite topology.

    To show $\mc M$ is a profinite monoid, we need to show the multiplication on $\mc M$, which is given by composition $\circ$, is continuous. Since subspaces of the form $\mc S_g$ for some $g : \star\varphi \to \star\varphi$ form a subbasis of $\mc S$, subspaces of the form $\mc M_g \coloneq \mc M \cap \mc S_g$ also form a subbasis of $\mc M$. Hence, it suffices to consider their preimage under $\circ$. Given such a $g$, by definition
    \[ \circ\inv(\mc M_g) = \scomp{(f,h) \in \mc M \times \mc M}{f_\varphi \circ h_\varphi = g} = \bigcup_{u \circ v = g} \mc M_u \times \mc M_v. \]
    Since there are only finitely many possible decompositions $u \circ v = g$ as $\Fin(\star\varphi,\star\varphi)$ is finite, the above union is finite, thus $\circ\inv(\mc M_g)$ is again clopen in $\mc M \times \mc M$, which implies $\circ$ is continuous for the profinite topology.
\end{proof}

The construction $\mc C \mapsto \mc M \coloneq \Mod(\mc C)(\star,\star)$ thus associates a categorical pretopos with the profinite monoid of the endomorphism monoid of its canonical model. Our remaining goal is to establish the 2-functoriality of this construction. Suppose we have two categorical theories $\mc C,\mc D$, with the associated profinite monoid $\mc M,\mc N$, respectively. Then any coherent functor $\theta : \mc D \to \mc C$ will induces a functor
\[ \theta^* : \Mod(\mc C) \to \Mod(\mc D), \]
by precomposing with $\theta$. Since we have equivalences $\Mod(\mc C) \simeq \ov{B\mc M}$ and $\Mod(\mc D) \simeq \ov{B\mc N}$ by Proposition~\ref{prop:monoiduptocauchy}, this equivalently induces a functor
\[ \theta^* : \ov{B\mc N} \to \ov{B\mc M}. \]
We thus first provide a concrete description of the category of functors between the Cauchy completions of the classifying categories of two monoids.

Given monoids $M,N$, recall that a \emph{semigroup morphism} $f : M \to N$ is a map between them that preserves multiplication, but not necessarily the unit. A \emph{conjugate} $a : f \to g$ between semigroup morphisms $f,g : M \to N$ is an element $a\in N$, such that $a f(1) = a = g(1) a$. These data organise into a 2-category, which we denote as $\Mon$.

\begin{lemma}\label{lem:cauchyfunc}
  For monoids $M,N$, we have an equivalence of categories,
  \[ \Cat(\ov{BM},\ov{BN}) \simeq \Mon(M,N). \]
\end{lemma}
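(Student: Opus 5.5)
We will use the universal property of the Cauchy completion. Since $\ov{BN}$ is Cauchy complete, restriction along $BM \hook \ov{BM}$ gives an equivalence
\[ \Cat(\ov{BM},\ov{BN}) \simeq \Cat(BM,\ov{BN}). \]
It therefore suffices to identify $\Cat(BM,\ov{BN})$ with $\Mon(M,N)$. We will do this with the concrete description of $\ov{BN}$ from \cref{rem:Cauchyofmonoid}.

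\textbf{Objects.} A functor $F : BM \to \ov{BN}$ sends $\star$ to some $\star_e$ with $e\in N$ idempotent. It sends each $m\in M$ to an element $F(m)\in N$ with $F(m)e = F(m) = eF(m)$. Functoriality says $F(mm') = F(m)F(m')$ and $F(1) = e$. Set $f \coloneq F : M \to N$. Then $f$ is a semigroup morphism with $f(1) = e$. Conversely, let $f : M\to N$ be a semigroup morphism. Then $f(1)$ is idempotent, because $f(1)f(1) = f(1\cdot 1) = f(1)$. Moreover $f(m) = f(1)f(m) = f(m)f(1)$, so sending $\star \mapsto \star_{f(1)}$ and $m \mapsto f(m)$ defines a functor $BM \to \ov{BN}$. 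These two assignments are mutually inverse on objects, so we in fact get an isomorphism and not merely an equivalence.

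\textbf{Morphisms.} A natural transformation $a : F \nt G$, with $F,G$ corresponding to $f,g$, has a single component $a : \star_{f(1)} \to \star_{g(1)}$ in $\ov{BN}$. This is an element $a\in N$ with $a f(1) = a = g(1) a$. Naturality reads $a f(m) = g(m) a$ for all $m\in M$.

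The one point needing care is matching this with the definition of a conjugate. The paper's definition of a conjugate, as stated, only records $af(1) = a = g(1)a$. I would read the intended definition as also including the naturality condition $af(m) = g(m)a$, which is the standard notion of conjugacy between semigroup morphisms. Composition is multiplication in $N$, and the identity on $f$ is $f(1)$. This matches composition of natural transformations, because vertical composition in $\ov{BN}$ is multiplication. The resulting assignment is bijective on hom-sets and functorial.

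\textbf{Main obstacle.} The only nontrivial step is this bookkeeping of the naturality condition in the definition of a 2-cell of $\Mon$. If the definition lacked $af(m) = g(m)a$, the hom-sets of $\Mon(M,N)$ would be strictly larger than the sets of natural transformations, and the claimed equivalence would fail. So I will state explicitly that conjugates are required to satisfy the intertwining equation. Combining the two steps with the Cauchy completion equivalence yields
\[ \Cat(\ov{BM},\ov{BN}) \simeq \Mon(M,N). \]
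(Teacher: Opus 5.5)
Your proof is correct and is the paper's argument: reduce to $\Cat(BM,\ov{BN})$ via the universal property of Cauchy completion, then read off semigroup morphisms and conjugates from the concrete description of $\ov{BN}$. Your caveat is also justified. The paper's stated definition of a conjugate omits the intertwining condition $af(m)=g(m)a$ for all $m\in M$, and this condition must be included, since otherwise (e.g.\ for $f=g=\id$ on a non-abelian group) the hom-sets of $\Mon(M,N)$ would be too large.
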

\begin{proof}[Proof Sketch]
  Here we provide the data of such an equivalence; for a complete proof showing it is an equivalence, see~\citet[Prop. 1.4.3]{rogers2021toposes}. By the universal property of Cauchy completion, we have
  \[ \Cat(\ov{BM},\ov{BN}) \simeq \Cat(BM,\ov{BN}). \]
  By the description of the Cauchy completion $\ov{BN}$ in~\cref{rem:Cauchyofmonoid}, a functor $BM \to \ov{BN}$ can then be equivalently described as a pair $(e,f)$, where $e\in N$ is an idempotent and $f : M \to eNe$ is a monoid morphism. It is well-known that this is exactly the data of a semigroup map $f : M \to N$, with $e = f(1)$; see e.g.~\citet{Steinberg2016}. By the description of morphisms in $\ov{BN}$, it is also easy to see that natural transformations corresponds to conjugates.
\end{proof}

Let us then consider the 2-category $\ProMon$ of profinite monoids, where now morphisms are given by \emph{continuous} semigroup morphisms and 2-cells are again conjugates between them. The functoriality of the construction given in this section can be described as follows. Let $\PTc$ denote the 2-category of categorical pretopoi.

\begin{theorem}
  The associated profinite monoid of a categorical pretopos extends to a 2-functor
  \[ \PTc\op \to \ProMon. \]
\end{theorem}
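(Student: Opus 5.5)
We first define the action on 1-cells and 2-cells, then check continuity and 2-functoriality.

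Let $\theta : \mc D \to \mc C$ be a coherent functor between categorical pretopoi, with canonical models $\star_{\mc C}, \star_{\mc D}$ and monoids $\mc M, \mc N$. Precomposition gives $\theta^* : \Mod(\mc C) \to \Mod(\mc D)$. Transporting along the equivalences of Proposition~\ref{prop:monoiduptocauchy} and applying Lemma~\ref{lem:cauchyfunc} yields a semigroup morphism $\theta_\sharp : \mc M \to \mc N$. Concretely, $\star_{\mc C}\theta$ is a model of $\mc D$, hence a retract of $\star_{\mc D}$. Fix a retraction pair $r : \star_{\mc D} \leftrightarrows \star_{\mc C}\theta : s$ and set
\[ \theta_\sharp(m) \coloneq s \circ (m\theta) \circ r, \]
where $m\theta$ denotes whiskering. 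This preserves multiplication because $rs = \id$, and $\theta_\sharp(1) = sr$ is the associated idempotent. A natural transformation $\eta : \theta \nt \theta'$ whiskers to $\star_{\mc C}\eta : \star_{\mc C}\theta \to \star_{\mc C}\theta'$. We send it to $a \coloneq s' \circ \star_{\mc C}\eta \circ r \in \mc N$, and a direct computation shows $a\,\theta_\sharp(1) = a = \theta'_\sharp(1)\,a$, so $a$ is a conjugate. The choice of $(r,s)$ is not canonical, so the construction is a pseudofunctor. Different choices give semigroup morphisms related by invertible conjugates, and identities and composites are preserved up to the coherent invertible 2-cells supplied by Lemma~\ref{lem:cauchyfunc}. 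Functoriality on 2-cells (vertical composition, identities, compatibility with whiskering) is inherited from the 2-functor $\theta \mapsto \theta^*$ from $\PTc\op$ to $\Cat$, composed with the equivalences above. We will therefore present the construction as the composite
\[ \PTc\op \to \Cat \to \Mon, \]
restricted along the identification of models with $\ov{B\mc M}$, and then only check that it lands in $\ProMon$.

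The genuinely new point, and the main obstacle, is \textbf{continuity} of $\theta_\sharp$ with respect to the topologies of Lemma~\ref{prop:profinitemonoid}. We use the subbasis $\mc N_g$ for $g : \star_{\mc D}\psi \to \star_{\mc D}\psi$, $\psi \in \mc D$. The component of $\theta_\sharp(m)$ at $\psi$ is
\[ s_\psi \circ m_{\theta\psi} \circ r_\psi, \]
which depends only on the single component $m_{\theta\psi} \in \Fin(\star_{\mc C}\theta\psi, \star_{\mc C}\theta\psi)$. Hence
\[ \theta_\sharp\inv(\mc N_g) = \bigcup\scomp{\mc M_u}{u : \star_{\mc C}\theta\psi \to \star_{\mc C}\theta\psi,\ s_\psi u r_\psi = g}, \]
a finite union of subbasic clopens, so it is open. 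The point that makes this work is that each component of $\theta_\sharp$ factors through a projection of $\mc S_{\mc C}$ onto a finite discrete factor. The same locality argument shows that the conjugates are just fixed elements of $\mc N$, so no continuity condition arises for 2-cells.

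It remains to check that composing $\theta_\sharp$ with $\theta'_\sharp$ agrees, up to a canonical invertible conjugate, with $(\theta\theta')_\sharp$, so that the composite retraction pair gives the composite morphism. This follows formally from Lemma~\ref{lem:cauchyfunc} being an equivalence natural in both variables. Together with continuity, this shows the construction factors through $\ProMon$ as a 2-functor $\PTc\op \to \ProMon$.
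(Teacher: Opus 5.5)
Your proposal is correct and follows essentially the same route as the paper. You compose $\theta\mapsto\theta^*$ with the equivalences $\Mod(\mc C)\simeq\ov{B\mc M}$ and $\Cat(\ov{B\mc M},\ov{B\mc N})\simeq\Mon(\mc M,\mc N)$, make the semigroup map explicit as $s\circ(m\theta)\circ r$ via a chosen retraction pair, and obtain continuity because the preimage of a subbasic clopen $\mc N_g$ constrains only the single finite component $m_{\theta\psi}$. Your additional remarks are accurate refinements of points the paper leaves implicit: you check the conjugate identities directly, and you note that the non-canonical choice of $(r,s)$ gives, strictly speaking, only a pseudofunctor.
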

\begin{proof}
  We already know that post-composition with coherent functors between two categorical theories $\mc C,\mc D$, whose associated profinite monoids are $\mc M,\mc N$ respectively, gives us the following functors
  \[ \PTc(\mc D,\mc C) \to \Cat(\Mod(\mc C),\Mod(\mc D)) \simeq \Cat(\ov{B\mc M},\ov{B\mc N}) \simeq \Mon(\mc M,\mc N). \]
  Since $\ProMon$ is a locally full sub-2-category of $\Mon$, it suffices to show that for any coherent functor $\theta : \mc D \to \mc C$, the induced semigroup morphism $\theta^* : \mc M \to \mc N$ is continuous. Concretely, this map is given as follows. We know that $\theta^*\star_{\mc C}$ is a retract of $\star_{\mc D}$, which gives the a pair of 2-cells $(r,s)$ in the following diagram,
  \[
  \begin{tikzcd}
    {\mc D} && {\mc C} \\
    & \Fin
    \arrow["\theta", bend left, from=1-1, to=1-3]
    \arrow[""{name=0, anchor=center, inner sep=0}, "{\star_{\mc D}}"', curve={height=16pt}, from=1-1, to=2-2]
    \arrow[""{name=1, anchor=center, inner sep=0}, "\theta^*\star_{\mc C}", curve={height=-16pt}, from=1-1, to=2-2]
    \arrow["{\star_{\mc C}}", from=1-3, to=2-2]
    \arrow["r"{description}, curve={height=-6pt}, shorten <=4pt, shorten >=4pt, Rightarrow, from=0, to=1]
    \arrow["s"{description}, curve={height=-6pt}, shorten <=4pt, shorten >=4pt, Rightarrow, from=1, to=0]
  \end{tikzcd}
  \]
  Then for any $f \in \mc M$, we have
  \[ \theta^*(f) = s(f\circ \theta)r. \]
  To show this map is continuous, consider the clopen $\mc N_g$ of $\mc N$ for some $g : \star_{\mc D}\varphi \to \star_{\mc D}\varphi$ with $\varphi\in\mc D$. By construction,
  \[ (\theta^*)\inv(\mc N_g) = \scomp{f\in\mc M}{s_\varphi \circ f_{\theta\varphi} \circ r_\varphi = g}, \]
  This is a local condition on $\mc M$, in the sense that it is the inverse image $\pi_\varphi\inv(U)$ for some subset $U \subseteq \Fin(\star_{\mc C}\varphi,\star_{\mc C}\varphi)$, hence is clopen. Since clopens of the form $\mc N_g$ form a subbasis, it follows that $\theta^*$ is continuous.
\end{proof}

When building the 2-functor $\PTc\op \to \ProMon$ we have made the implicit assumption that we have chosen a canonical model for each categorical theory in $\PTc$, and this 2-functor is subject to such a choice. There is no canonical way of building such a 2-functor without choice. However, we will see that there is a 2-functor from profinite monoids to categorical theories which can be canonically constructed, and this will be the focus of the next section.

\section{Classifying Topoi of Profinite Monoids}\label{sec:toposofprofmonoid}

In this section, we will describe the inverse of the 2-functor $\PTc\op \to \ProMon$ constructed in~\cref{sec:profmonoidofcattheory}. However, instead of directing constructing a categorical pretopos from a profinite monoid, our approach is to study the \emph{classifying topos} for a profinite monoid $\mc M$. In particular, we will see that the classifying topos construction induces a 2-functor
\[ \Set[-] : \ProMon \to \CTopoi \]
from the 2-category of profinite monoids to the 2-category of coherent topoi and coherent geometric morphisms. Later in~\cref{sec:duality} we will see that this 2-functor is 2-fully faithful, and its image lies in $\PTc\op$ under the equivalence $\CTopoi \simeq \PT\op$.

The classifying topos of a profinite monoid $\mc M$ is the topos $\Set[\mc M]$ of \emph{continuous left $\mc M$-sets}. Explicitly, an object in $\Set[\mc M]$ is a set $X$ equipped with a left $\mc M$-action $\mc M \times X \to X$, which is a continuous map when $X$ is viewed as a discrete space. Morphisms are simply functions that preserves the $\mc M$-action. 

We start by recalling some basic properties of $\Set[\mc M]$; for an extensive investigation of topoi induced by topological monoids, we refer the readers to~\citet{rogers2021toposes}. By forgetting the topology, the monoid $\mc M$ also induces a presheaf topos $[B\mc M,\Set]$ of arbitrary left $\mc M$-sets. We have an evident fully faithful functor $\Set[\mc M] \hook [B\mc M,\Set]$. If $X\in[B\mc M,\Set]$ lies in $\Set[\mc M]$, we will say it is \emph{continuous}. We can characterise continuous left $\mc M$-sets as follows:

\begin{lemma}\label{lem:continuousaction}
  Given $X\in\Set[\mc M]$. The following are equivalent:
  \begin{enumerate}
    \item $X$ is continuous.
    \item Any subset $I^f_x = \scomp{g}{fx = gx}$ for any $x\in X,f\in\mc M$ is (cl)open.
    \item Any relation $\operatorname{\sim}_x = \scomp{(f,g)}{fx = gx}$ for any $x\in X$ is (cl)open.
  \end{enumerate}
\end{lemma}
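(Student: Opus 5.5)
The plan is to prove the cycle $(1)\Rightarrow(3)\Rightarrow(2)\Rightarrow(1)$, reading $X$ as an arbitrary left $\mc M$-set in $[B\mc M,\Set]$. We show each of (2) and (3) in its ``open'' form. Closedness then comes for free once (1) is known, as explained at the end. Throughout, $X$ carries the discrete topology, so every subset of $X$ and of $X\times X$ is clopen. No compactness of $\mc M$ is needed; only the topology of $\mc M\times\mc M$ enters.

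For $(1)\Rightarrow(3)$, fix $x\in X$. The map $\mc M\to X$, $f\mapsto fx$, is the composite of the continuous map $f\mapsto(f,x)$ into $\mc M\times X$ with the action, so it is continuous. Hence
\[ \mc M\times\mc M\to X\times X,\quad (f,g)\mapsto(fx,gx) \]
is continuous, and $\sim_x$ is the preimage of the diagonal of $X\times X$. The diagonal is clopen since $X\times X$ is discrete, so $\sim_x$ is clopen. For $(3)\Rightarrow(2)$, note that $I^f_x$ is the preimage of $\sim_x$ under the continuous map $\mc M\to\mc M\times\mc M$, $g\mapsto(f,g)$. So $I^f_x$ is open (resp. clopen) whenever $\sim_x$ is.

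For $(2)\Rightarrow(1)$, it suffices to show that for every $y\in X$ the set $A_y=\scomp{(g,x)}{gx=y}\subseteq\mc M\times X$ is open, since these are the preimages of points under the action. Decompose it as
\[ A_y=\bigcup_{x\in X}\scomp{g}{gx=y}\times\set{x}. \]
For each $x$, the slice $\scomp{g}{gx=y}$ is either empty or equal to $I^f_x$ for any $f$ with $fx=y$. In both cases it is open by (2). Since $\set{x}$ is open in the discrete space $X$, the union is open and the action is continuous.

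Finally, the ``(cl)'' refinement. If only openness is assumed in (2) or (3), the cycle still yields (1). Then $(1)\Rightarrow(3)\Rightarrow(2)$ as shown above produces the clopen versions. Alternatively, the complement of $I^f_x$ is $\bigcup_{h:\,hx\neq fx}I^h_x$, which is visibly open. There is no real obstacle here. The only point needing care is to write $A_y$ as a union of slices, which is exactly how the discreteness of $X$ reduces joint continuity of the action to separate continuity in $\mc M$ for each fixed $x$.
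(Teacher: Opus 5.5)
Your proof is correct. The paper gives no argument of its own here and only cites Rogers' work on toposes of topological monoid actions. Your cycle $(1)\Rightarrow(3)\Rightarrow(2)\Rightarrow(1)$ is the standard argument that citation points to, supplied in full: you slice $\scomp{(g,x)}{gx=y}$ along the discrete factor $X$, and you get the clopen version from the fact that the complement of $I^f_x$ is $\bigcup_{hx\neq fx}I^h_x$. You are also right to read $X$ as an arbitrary object of $[B\mc M,\Set]$, since with $X\in\Set[\mc M]$ as literally written condition (1) would hold automatically.
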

\begin{proof}
  This is well-known; for a proof see e.g.~\citet{Rogers2023toposesof}.
\end{proof}

We observe that the above characterisation of when $X\in[B\mc M,\Set]$ is continuous is local on points in $X$, thus it will in particular imply that $\Set[\mc M]$ is closed under subobjects in $[B\mc M,\Set]$. In fact, it is the inverse image part of a hyperconnected geometric morphism $[B\mc M,\Set] \surj \Set[\mc M]$ (cf.~\cite{Rogers2023toposesof}), thus is also closed under quotients.

Our first goal is to show $\Set[\mc M]$ is a coherent topos, which we proceed by studying its generating objects. The presheaf topos $[B\mc M,\Set]$ is generated by the unique representable object, which as a left $\mc M$-set is the monoid $\mc M$ itself equipped with the canonical $\mc M$-action. $\mc M$ in general is not continuous, unless $\mc M$ has the discrete topology. However, it will be true that quotients of $\mc M$ which lie in $\Set[\mc M]$ generate the latter topos. Let us first characterise continuous quotients of $\mc M$:

\begin{lemma}\label{lem:continuousiffcongopen}
  Given any surjection $q : \mc M \surj N$ in $[B\mc M,\Set]$, $N$ is continuous iff the left congruence $\sim_N$ on $\mc M$ given by $f \sim_N g \eff q(f) = q(g)$ is open.
\end{lemma}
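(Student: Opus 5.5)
The plan is to reduce the statement to \cref{lem:continuousaction}, using that $q$ is an $\mc M$-equivariant surjection. Write $x_0 \coloneq q(1)\in N$. Equivariance gives $q(f) = q(f\cdot 1) = f x_0$ for every $f\in\mc M$. Hence the congruence $\sim_N$ on $\mc M$ coincides with the relation $\sim_{x_0} = \scomp{(f,g)}{fx_0 = gx_0}$ from \cref{lem:continuousaction}(3). (It is a left congruence because $q$ is a morphism of left $\mc M$-sets.)

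For the forward direction, suppose $N$ is continuous. Then \cref{lem:continuousaction}(3), applied at the point $x_0$, says $\sim_{x_0}$ is (cl)open in $\mc M\times\mc M$. Since $\sim_{x_0} = \sim_N$, the congruence $\sim_N$ is open.

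For the converse, suppose $\sim_N$ is open. I would verify condition (3) of \cref{lem:continuousaction} for an arbitrary $x\in N$. Since $q$ is surjective, write $x = q(h) = hx_0$ for some $h\in\mc M$. Then
\[ \sim_x = \scomp{(f,g)}{fh x_0 = gh x_0} = (\rho_h\times\rho_h)\inv(\sim_N), \]
where $\rho_h : \mc M\to\mc M$ is right multiplication $f\mapsto fh$. Right multiplication is continuous because multiplication on $\mc M$ is jointly continuous (as in \cref{prop:profinitemonoid}). So $\sim_x$ is the preimage of an open set under a continuous map, hence open, and \cref{lem:continuousaction} gives that $N$ is continuous.

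The main point needing care is the bookkeeping around \cref{lem:continuousaction}, which is stated for $X$ already in $\Set[\mc M]$. Here it should be read as a characterisation of continuity for an arbitrary $X\in[B\mc M,\Set]$, and I would use it only in that form. There is also the open versus clopen issue. If needed, one can observe that an open equivalence relation on a compact space is automatically clopen, since its complement is the union of the open sets $U\times V$ for distinct classes $U$ and $V$. Apart from that, the argument is just the translation $q(f) = fx_0$.
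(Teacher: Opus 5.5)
Your proof is correct and is essentially the paper's argument: it uses the same base point $x_0 = q(1)$ with $q(f) = f x_0$, and the same observation that at $x = q(h)$ the relevant relation is the preimage of $\sim_N$ under right multiplication by $h$. The only difference is that you invoke item (3) of \cref{lem:continuousaction} directly, whereas the paper works with item (2) and so has to translate between $\sim_N$ and the slices $I^f_x$ by hand: it writes $\sim_N = \bigcup_f I^f_x\times I^f_x$, and it recovers $I^f_x$ from $\sim^h_N$ using product neighbourhoods.
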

\begin{proof}
  Suppose $N$ is continuous. Consider $x = q(1)$. For any $f\in\mc M$, we have
  \[ I^f_x = \scomp{g}{f \cdot x = g \cdot x} = \scomp{g}{q(f) = q(g)} = \scomp{g}{f \sim_N g}. \]
  In particular, we have that
  \[ \operatorname{\sim}_N = \bigcup_{f\in\mc M} I^f_x \times I^f_x, \]
  which implies $\sim_N$ is open. On the other hand, suppose $\sim_N$ is open. For any $x\in N$, suppose $q(h) = x$. Then by definition,
  \[ I^f_x = \scomp{g}{f \cdot x = g \cdot x} = \scomp{g}{fh \sim_N gh}. \]
  First observe that the following new left congruence is also open
  \[ f \sim^h_N g \coloneq fh \sim_N gh, \]
  since it is the preimage of $\sim_N$ under the continuous map
  \[ (-) \cdot h \times (-) \cdot h : \mc M \times \mc M \to \mc M \times \mc M. \]
  This way, if $f \sim^h_N g$, then we can find some open neighbourhoods $U_f,U_g$ of $f,g$ that $U_f \times U_g \subseteq \operatorname{\sim}^h_N$. It then follows that
  \[ I^f_x = \bigcup_{f \sim^h_N g} U_g, \]
  which is thus open.
\end{proof}

The above fact applies to any topological monoid. For a profinite monoid $\mc M$, it can be equivalently described as a cofiltered limit of finite monoids (cf.~\citet[Ch. VI.2]{johnstone1982stone}), thus we can write it as follows, 
\[ \mc M \cong \lt_{i\in I}M_i. \]
In this case the topology on $\mc M$ is induced by identifying itself as a subspace $\mc M \subseteq \prod_{i\in I}M_i$ of the product space. In this case, continuous images of $\mc M$ are governed by its finite continuous quotients $\mc M \surj M_i$:

\begin{lemma}\label{lem:continuousquotientsprofinite}
  Let $\mc M \cong \lt_{i\in I}M_i$ be a profinite monoid. For any quotient $\mc M \surj N$ in $[B\mc M,\Set]$, $N$ is continuous iff $N$ factors through some $\mc M \surj M_i \surj N$.
\end{lemma}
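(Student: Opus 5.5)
We prove the two directions separately, using \cref{lem:continuousiffcongopen} to convert continuity of $N$ into openness of the left congruence $\sim_N$ on $\mc M$. Throughout, the limit presentation $\mc M \cong \lt_{i\in I}M_i$ is taken with surjective projections $p_i : \mc M \surj M_i$ that are continuous monoid morphisms onto finite discrete monoids. This is harmless: we may always replace $M_i$ by the image of $\mc M$ in it. We also use that the kernel congruences $\ker p_i$ form a cofiltered family whose classes give a basis of clopens.

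For the easy direction, suppose $N$ factors as $\mc M \surj M_i \surj N$, where the second map is a map of left $\mc M$-sets. Then $f \sim_N g$ depends only on $(p_i(f),p_i(g))$. Hence $\sim_N = (p_i\times p_i)\inv(R)$ for some relation $R \subseteq M_i\times M_i$. Since $M_i$ is discrete and finite, this preimage is clopen, so $N$ is continuous by \cref{lem:continuousiffcongopen}.

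For the converse, suppose $N$ is continuous. Then $\sim_N$ is open, and it is also closed, since its complement is a union of open sets of the form $\sim_N$-class $\times$ $\sim_N$-class. Each $\sim_N$-class is open, being the fiber over a point of the relation with one coordinate fixed; this is exactly the set $I^f_x$ from the proof of \cref{lem:continuousiffcongopen}. Because $\mc M$ is compact and the classes are disjoint and open, there are only finitely many classes, so $N$ is finite. Now $\sim_N$ is a clopen subset of the compact space $\mc M\times\mc M$. Its complement is closed, hence compact, and the diagonal-type sets $\ker p_i$ form a cofiltered family of closed equivalence relations whose intersection is the diagonal $\Delta \subseteq \sim_N$. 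A standard compactness argument then gives a single $i$ with $\ker p_i \subseteq \sim_N$: the sets $\ker p_i \setminus \sim_N$ form a cofiltered family of compact sets with empty intersection, so one of them is empty. Consequently $q : \mc M \surj N$ factors set-theoretically through $p_i$ via $\bar q : M_i \surj N$, with $\bar q(p_i(f)) = q(f)$.

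It remains to check that $\bar q$ is a map of left $\mc M$-sets, where $M_i$ carries the action $f\cdot m = p_i(f)m$. For $m = p_i(h)$ we compute $\bar q(f\cdot m) = \bar q(p_i(fh)) = q(fh) = f\cdot q(h) = f\cdot \bar q(m)$. The main obstacle is the compactness step extracting a single $i$ with $\ker p_i \subseteq \sim_N$. The key point there is using openness of $\sim_N$ together with cofilteredness, and if the projections are not surjective one must first pass to images. A more hands-on alternative is to cover each of the finitely many clopen classes by basic opens $p_i\inv(\{m\})$ and take a common refinement index $i$ using cofilteredness.
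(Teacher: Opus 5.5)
Your proof is correct. Its skeleton matches the paper's: both directions go through \cref{lem:continuousiffcongopen}, and your easy direction ($\sim_N$ is the preimage of a relation on the finite discrete $M_i$) is the same as the paper's observation that $M_i$ is continuous, hence so are its quotients. The difference is in the hard direction. The paper cites Johnstone's \emph{Stone Spaces}, Prop.~VI.2.9, to get a clopen congruence inside the open relation $\sim_N$, and then reads it as some $\sim_i$. You instead prove directly that some $\ker p_i\subseteq\sim_N$, using the finite intersection property for the cofiltered family of closed sets $\ker p_i\setminus\sim_N$ in the compact space $\mc M\times\mc M$. Your route is more elementary and self-contained. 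Because the presentation $\mc M\cong\lt_{i\in I}M_i$ is given, the kernels $\ker p_i$ are already two-sided congruences, so you need no general result producing congruences inside equivalence relations. You also land on an index of the given system, a step the paper leaves implicit when it passes from ``a clopen congruence'' to ``some $\sim_i$''; that step needs either your compactness argument or a presentation by all finite continuous quotients. In addition, you make explicit the equivariance of $\bar q$ and the reduction to surjective projections. What the citation buys in return is brevity and independence from the chosen presentation. One small remark: your compactness argument only uses that $\sim_N$ is an open set containing the diagonal. The preliminary steps showing that $\sim_N$ is closed and $N$ is finite are correct but unnecessary.
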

\begin{proof}
  Since the congruence $\sim_i$ induced by the quotient $\mc M \surj M_i$ is clopen in $\mc M$, by Lemma~\ref{lem:continuousiffcongopen} $M_i$ is continuous, which implies any quotients of $M_i$ are also continuous. On the other hand, suppose we have a continuous quotient $\mc M \surj N$ inducing an open equivalence relation $\sim_N$ on $\mc M$. By~\cite[Prop. VI.2.9]{johnstone1982stone}, $\sim_N$ contains a clopen congruence $\sim_i$ on $\mc M$, which means we have a factorisation $\mc M \surj M_i \surj N$.
\end{proof}

Recall that an object $X$ in a topos $\mc E$ is \emph{compact} if $X\in\sub_{\mc E}(X)$ is a compact object. Equivalently, any jointly surjective family $\set{Y_i \to X}_{i\in I}$ contains a finite family which is already jointly surjective. 

\begin{proposition}\label{prop:compactobjectsincontinuousaction}
  Let $\mc M \cong \lt_{i\in I}M_i$ be a profinite monoid. For any $X$ in $\Set[\mc M]$, the following are equivalent:
  \begin{enumerate}
    \item $X$ is a compact object in $\Set[\mc M]$;
    \item $X$ has a finite underlying set;
    \item $X$ can be written as a quotient $\coprod_{i\in n}M_i \surj X$.
  \end{enumerate}
\end{proposition}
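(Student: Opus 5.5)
I plan to prove the cycle of implications (3)$\Rightarrow$(2)$\Rightarrow$(1)$\Rightarrow$(3). Throughout I read $\coprod_{i\in n}M_i$ as a finite coproduct of finite continuous quotients $M_{i_1},\dots,M_{i_n}$ of $\mc M$; the indices need not be distinct.

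For (3)$\Rightarrow$(2): each $M_i$ is a finite set, so a finite coproduct of them is finite, and any quotient of it has a finite underlying set. For (2)$\Rightarrow$(1): take a jointly surjective family $\set{Y_j \to X}_{j\in J}$ in $\Set[\mc M]$. Since $\Set[\mc M]\hook[B\mc M,\Set]$ is closed under subobjects and quotients, joint surjectivity can be checked on underlying sets. Each of the finitely many elements $x\in X$ lies in the image of some $Y_{j_x}$, so the finite subfamily $\set{Y_{j_x}}_{x\in X}$ is already jointly surjective.

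For (1)$\Rightarrow$(3), which carries the content, I first write any $X$ as the union of its cyclic subobjects $\mc M x$. Each $\mc M x$ is the image of the action map $\mc M \surj \mc M x$, $f\mapsto fx$. This is a quotient of $\mc M$ in $[B\mc M,\Set]$ whose codomain is continuous, since $\mc M x\subseteq X$ and $\Set[\mc M]$ is closed under subobjects. So by~\cref{lem:continuousquotientsprofinite} it factors as $\mc M\surj M_{i_x}\surj \mc M x$. The maps $M_{i_x}\surj \mc M x\inj X$, indexed by $x\in X$, then form a jointly surjective family in $\Set[\mc M]$, because each $M_i$ is continuous by the same lemma. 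Compactness of $X$ yields finitely many $x_1,\dots,x_n$ with $\coprod_k M_{i_{x_k}}\surj X$, which is (3).

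The main point to check is that the $M_i$ genuinely lie in $\Set[\mc M]$ with the left action induced from the quotient $\mc M\surj M_i$, and that the map $M_{i_x}\to X$ is equivariant. The first holds because the quotient is a monoid homomorphism, so its kernel is a congruence. Equivariance then follows from the factorisation in~\cref{lem:continuousquotientsprofinite}. Beyond this, the remaining steps are routine.
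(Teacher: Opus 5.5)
Your proposal is correct and is essentially the paper's proof. You use the same cycle (3)$\Rightarrow$(2)$\Rightarrow$(1)$\Rightarrow$(3), and for (1)$\Rightarrow$(3) you cover $X$ by the images of the maps $\mc M \to X$ (your cyclic subobjects $\mc M x$), factor each as $\mc M \surj M_i \surj \mc M x$ via \cref{lem:continuousquotientsprofinite}, and extract a finite subcover by compactness, exactly as the paper does. Your extra details for (2)$\Rightarrow$(1) (joint surjectivity checked on underlying sets) and for the equivariance of $M_{i_x} \to X$ are sound, and they spell out what the paper calls evident.
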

\begin{proof}
  (3) $\nt$ (2) and (2) $\nt$ (1) are evident. For (1) $\nt$ (3), suppose $X$ is compact. Since $\mc M$ is the only representable in $[B\mc M,\Set]$, there is a jointly epic family of maps from $\mc M$ to $X$. Given any morphism $\mc M \to X$, we may factor it as follows,
  \[
  \begin{tikzcd}
    {\mc M} & X \\
    {M_i} & {X_i}
    \arrow[from=1-1, to=1-2]
    \arrow[two heads, from=1-1, to=2-1]
    \arrow[two heads, from=1-1, to=2-2]
    \arrow[two heads, from=2-1, to=2-2]
    \arrow[tail, from=2-2, to=1-2]
  \end{tikzcd}
  \]
  The factorisation $\mc M \surj X_i \inj X$ is the image factorisation in $[B\mc M,\Set]$. Since $\Set[\mc M]$ is closed under subobjects, $X_i$ is also continuous, and thus the factorisation $\mc M \surj M_i \surj X_i$ is induced by Lemma~\ref{lem:continuousquotientsprofinite}. Since $X$ is compact, it is a finite union $X = \bigcup_{i\in n}X_i$, which implies $\coprod_{i\in n}M_i \surj X$.
\end{proof}

In particular, Proposition~\ref{prop:compactobjectsincontinuousaction} implies that compact objects in $\Set[\mc M]$ coincide with $\Fin[\mc M]$, the full subcategory of $\Set[\mc M]$ having a finite underlying set. Recall that an object $X$ is \emph{coherent} in a topos if it is compact, and if any map $Y \to X$ from a compact domain has a compact kernel $Y \times_XY$. A topos is coherent if its full subcategory of coherent objects is left exact and forms a separating family (cf.~\cite[D3.3]{johnstone2002sketches}).

\begin{proposition}\label{prop:contprofincoherent}
  For any profinite monoid $\mc M$, $\Set[\mc M]$ is a coherent topos, where its coherent objects are exactly $\Fin[\mc M]$.
\end{proposition}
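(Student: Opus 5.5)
We use Proposition~\ref{prop:compactobjectsincontinuousaction}, which identifies the compact objects of $\Set[\mc M]$ with $\Fin[\mc M]$. The plan has three steps: show that every object of $\Fin[\mc M]$ is coherent, show that $\Fin[\mc M]$ is closed under finite limits, and show that it forms a separating family. Coherent objects are in particular compact, so once the first step is done the coherent objects are exactly $\Fin[\mc M]$.

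\emph{Coherence and finite limits.} We first record that finite limits in $\Set[\mc M]$ are computed as in $[B\mc M,\Set]$, hence on underlying sets. Indeed, the inclusion $\Set[\mc M]\hook[B\mc M,\Set]$ is the inverse image of the hyperconnected morphism, so it preserves finite limits. Alternatively, one checks directly via Lemma~\ref{lem:continuousaction}. For a pair $(x,y)$ in a product, the stabiliser-type set $I^f_{(x,y)}$ equals $I^f_x\cap I^f_y$, which is open. Subobjects of continuous sets are continuous, so equalisers are handled as well.

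Consequently the terminal object (a singleton) is finite, and finite products and equalisers of finite continuous $\mc M$-sets are again finite. So $\Fin[\mc M]$ is left exact. Now take $X\in\Fin[\mc M]$ and a map $Y\to X$ with $Y$ compact. Then $Y$ is finite by Proposition~\ref{prop:compactobjectsincontinuousaction}, so the kernel pair $Y\times_X Y$ is a finite set, hence compact. Thus $X$ is coherent.

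\emph{Separating family.} This is the step requiring the most care, but it follows the proof of Proposition~\ref{prop:compactobjectsincontinuousaction}. Let $X\in\Set[\mc M]$ be arbitrary and $x\in X$. The orbit map $\mc M\to X$, $f\mapsto fx$, has image the suborbit $\mc M x$, which is continuous since $\Set[\mc M]$ is closed under subobjects. By Lemma~\ref{lem:continuousquotientsprofinite}, the quotient $\mc M\surj\mc Mx$ factors through some finite $M_i$, so $\mc Mx$ is finite. Hence every $X$ is the union of its finite sub-$\mc M$-sets $\mc Mx$, and the family of maps from objects of $\Fin[\mc M]$ into $X$ is jointly epimorphic. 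This shows that $\Fin[\mc M]$ separates.

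It remains to check that the topos is generated in the right sense, namely that $\Set[\mc M]$ is equivalent to sheaves on $\Fin[\mc M]$ with the coherent topology. This follows from the standard characterisation \cite[D3.3]{johnstone2002sketches}: a topos is coherent whenever it has a separating, left exact full subcategory of coherent objects. Both conditions were established above, so $\Set[\mc M]$ is coherent, with coherent objects exactly $\Fin[\mc M]$.
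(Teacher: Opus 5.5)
Your proof is correct and is essentially the paper's argument. Separation comes from covering each object by images of maps from $\mc M$, which are finite because they factor through some $M_i$ by Lemma~\ref{lem:continuousquotientsprofinite}; left exactness of $\Fin[\mc M]$ holds because finite limits are computed on underlying sets, and with Proposition~\ref{prop:compactobjectsincontinuousaction} this makes the coherent objects exactly the finite ones (you cover $X$ by the orbits $\mc M x$ rather than by the $M_i$ mapping onto them, a purely cosmetic difference).
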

\begin{proof}
  By Yoneda, any object $X\in\Set[\mc M]$ is covered by a family of morphisms from $\mc M$ in $[BM,\Set]$. Then by the same proof as in Proposition~\ref{prop:compactobjectsincontinuousaction}, we obtain a cover of $X$ by compact objects of the form $M_i$. This means that $\Fin[\mc M]$ is separating in $\Set[\mc M]$. Hence by~\cref{prop:compactobjectsincontinuousaction}, it suffices to show $\Fin[\mc M]$ is left exact, since in this case the kernel of a map $f : Y \to X$ in $\Fin[\mc M]$ lies in $\Fin[\mc M]$ as well, thus will be compact as well. But this holds trivially since finite limits in $\Set[\mc M]$ is computed as in $[B\mc M,\Set]$, and thus commutes with the forgetful functor $\Set[\mc M] \to \Set$. In particular, a finite limit in $\Fin[\mc M]$ again has a finite underlying set, thus is contained in $\Fin[\mc M]$ as well.
\end{proof}

The remaining goal of this section is to describe the 2-functoriality of the construction $\mc M \mapsto \Set[\mc M]$. For any continuous semigroup morphism $\theta : \mc M \to \mc N$ between two profinite monoids, let $e = \theta(1)$. We define the base change functor
\[ \theta^* : \Set[\mc N] \to \Set[\mc M], \]
by sending any continuous $\mc N$-set $X$ to the following $\mc M$-set,
\[ \theta^*X = eX \coloneq \scomp{x\in X}{\exists y\in X.\, ey = x}. \]
For any $f\in\mc M$ and $x\in\theta^*X$, the action on $\theta^*X$ is simply 
\[ f \cdot x \coloneq \theta(f) \cdot x. \]
Notice that this is a well-defined action: Associativity is evident; For unity,
\[ 1 \cdot x = e \cdot x = eey = ey = x. \]
Furthermore, $\theta^*X$ is indeed a continuous $\mc M$-set,
\[ I^f_x \coloneq \scomp{g\in\mc M}{fx = gx} = \scomp{g\in\mc M}{\theta(f) x = \theta(g) x} = \theta\inv(I^{\theta f}_x). \]
Thus, if $\theta$ is continuous, $I^f_x$ must also be open for any $x\in X$ and $f\in\mc M$. This shows that $\theta^*$ is well-defined on objects.

For morphisms, suppose we have $u : X \to Y$ in $\Set[\mc N]$. $\theta^*$ also does not change the underlying function, viz. $\theta^*u = u$. To show this is well-defined, notice that if $x \in \theta^*X$, then
\[ u(x) = u(ey) = eu(y) \in \theta^*Y, \]
thus we indeed have a morphism $u : \theta^*X \to \theta^*Y$. This construction gives us a 2-functor from $\ProMon$, the 2-category of profinite monoids, to $\CTopoi$, the 2-category of coherent topoi and coherent geometric morphisms. Recall that a geometric morphism $f : \mc E \to \mc F$ between two coherent topoi is coherent, if the inverse image $f^*$ preserves coherent objects.

\begin{theorem}\label{thm:promonoidtotopos}
  The above construction gives us a 2-functor.
  \[ \Set[-] : \ProMon \to \CTopoi. \]
\end{theorem}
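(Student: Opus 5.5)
We need three things: (a) for each continuous semigroup morphism $\theta : \mc M \to \mc N$, the base change functor $\theta^*$ is the inverse image of a coherent geometric morphism $\Set[\mc M] \to \Set[\mc N]$; (b) each conjugate $a : \theta \to \theta'$ induces a natural transformation $\theta^* \nt \theta'^*$; (c) the assignment respects identities and composition of 1-cells, and vertical and horizontal composition of 2-cells.

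For (a) we show that $\theta^*$ preserves finite limits and arbitrary colimits, hence has a right adjoint by the special adjoint functor theorem, since $\Set[\mc N]$ is a Grothendieck topos. The key point is that $eX$ is the image of the idempotent endomorphism $e\cdot(-)$ of the underlying set of $X$. Equivalently, $eX = \scomp{x\in X}{ex = x}$, because $e$ is idempotent.

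Limits and colimits in $\Set[\mc N]$ are computed on underlying sets: for colimits this holds since $\Set[\mc N]$ is closed under quotients and coproducts in $[B\mc N,\Set]$, and for finite limits by the argument in \cref{prop:contprofincoherent}. So it suffices to check that $X \mapsto \scomp{x}{ex=x}$ commutes with finite limits and colimits of sets, compatibly with the induced $\mc M$-actions.
\begin{itemize}
\item \emph{Finite limits.} These are clear, since fixed points of $e$ are computed componentwise.
\item \emph{Coproducts.} These are clear for the same reason.
\item \emph{Coequalisers.} This is the main obstacle. Given $q : X \surj X/{\sim}$, we must show that $eX \to e(X/{\sim})$ is surjective and that its kernel is the congruence generated on $eX$. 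Surjectivity holds because $e[q(x)] = [q(ex)]$. For the kernel, if a zigzag $x = x_0 \sim x_1 \sim \dots \sim x_n = x'$ in $X$ is generated by the parallel pair $u,v : R \rightrightarrows X$, then applying $e$ to the whole zigzag gives a zigzag in $eX$, because $e$ commutes with $u$ and $v$, which are $\mc N$-equivariant. Since $ex = x$ and $ex' = x'$, the endpoints are unchanged.
\end{itemize}
So $\theta^*$ is left exact and cocontinuous, hence the inverse image of a geometric morphism. It is coherent because, by \cref{prop:contprofincoherent}, the coherent objects are exactly $\Fin[\mc N]$ and $\Fin[\mc M]$, and $eX \subseteq X$ is finite whenever $X$ is.

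For (b), given a conjugate $a$ with $ae = a = e'a$, where $e = \theta(1)$ and $e' = \theta'(1)$, define $a_X : eX \to e'X$ by $x \mapsto ax$. This lands in $e'X$ because $e'ax = ax$. It is $\mc M$-equivariant: for $f\in\mc M$ we have $a\theta(f) = ae\theta(f) = \ldots$, and naturality of conjugates in the $\Mon$ sense gives $a\theta(f) = \theta'(f)a$. (This is the condition implicit in \cref{lem:cauchyfunc}; if it is not part of the stated definition, we recover it from the correspondence with natural transformations there.) Naturality of $a_X$ in $X$ holds since morphisms in $\Set[\mc N]$ are $\mc N$-equivariant.

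For (c), the construction is strictly functorial on underlying data. We have $(\psi\theta)^*X = \psi(\theta(1))X$, while $\theta^*\psi^*X = \theta(1)\,\psi(1)X$ with $\mc M$ acting through $\psi(\theta(f))$. Because $\psi(\theta(1)) = \psi(\theta(1))\psi(1)$ in the monoid receiving $\psi$, these coincide up to the evident identification, and the same holds for actions and 2-cells. The assignment is contravariant on inverse images, which matches the direction of geometric morphisms, and vertical and horizontal composition of 2-cells both correspond to multiplication of conjugating elements. This establishes the 2-functor.
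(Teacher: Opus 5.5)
Your proof is correct, but it is much more self-contained than the paper's. The paper's proof is essentially a citation. It invokes \citet[Thm.~5.4.3]{rogers2021toposes} both for $\theta^*$ being the inverse image of a geometric morphism and for conjugates inducing 2-cells. The only argument it supplies is the coherence step, and that step coincides with yours: $\theta^*X\subseteq X$, and the coherent objects are exactly the finite ones by \cref{prop:compactobjectsincontinuousaction} and \cref{prop:contprofincoherent}.

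You instead reprove the cited input by hand:
\begin{itemize}
\item You identify $eX$ with the fixed points of the idempotent $e$.
\item You reduce to underlying sets, which is legitimate because $\Set[\mc N]$ is closed in $[B\mc N,\Set]$ under finite limits, coproducts and quotients.
\item You handle coequalisers by pushing zigzags through $e$, using equivariance of $u,v$.
\item You obtain the right adjoint from the special adjoint functor theorem.
\item You make strict 1-functoriality explicit via $\psi(\theta(1))\psi(1)=\psi(\theta(1))$, and you give the 2-cells by the formula $x\mapsto ax$.
\end{itemize}
This buys independence from the thesis and concrete formulas. The cost is a non-explicit direct image (via SAFT) and some routine checks that the citation absorbs.

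Two small corrections.
\begin{itemize}
\item The horizontal composite of $a:\theta\to\theta'$ and $b:\psi\to\psi'$ is the element $b\,\psi(a)=\psi'(a)\,b$, not a bare product of the conjugating elements. This is indeed what the composite of components $x\mapsto b\,\psi(a)\,x$ gives on $\theta^*\psi^*X$.
\item You are right that the paper's definition of a conjugate omits the condition $a\theta(f)=\theta'(f)a$, which you need for $a_X$ to be $\mc M$-equivariant. However, that condition cannot be ``recovered'' from \cref{lem:cauchyfunc}, because without it the lemma is false. For a non-abelian group $N$ and $f=g=\id_N$, every element of $N$ would be a conjugate $\id\to\id$, whereas natural transformations $\id\Rightarrow\id$ on $\ov{BN}\simeq BN$ are exactly the central elements. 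So the condition must simply be taken as part of the definition.
\end{itemize}
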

\begin{proof}
  The fact that $\theta^*$ is the inverse image part of a geometric morphism, and that any conjugacy also induces 2-cells in $\mb{Topoi}$, are established by~\citet[Thm. 5.4.3]{rogers2021toposes}. Since for any $X$, $\theta^*X$ is a subset of $X$, it follows by Proposition~\ref{prop:compactobjectsincontinuousaction} that $\theta^*$ restricts to a functor on coherent objects, thus it lands in $\CTopoi$.
\end{proof}

To show the 2-functor in~\cref{thm:promonoidtotopos} describes an inverse to $\PTc\op \to \ProMon$ constructed in~\cref{sec:profmonoidofcattheory}, we need to show, in particular, that $\Set[-] : \ProMon \hook \CTopoi$ is 2-fully faithful and lands in the full subcategory $\PTc\op \hook \CTopoi$. Now in fact~\citet[Cor. 5.4.7]{rogers2021toposes} has already shown that this 2-functor is \emph{locally fully faithful}, i.e. for any $\mc M,\mc N\in\ProMon$, the induced functor
\[ \ProMon(\mc M,\mc N) \hook \CTopoi(\Set[\mc M],\Set[\mc N]) \]
is fully faithful. In the next section we will prove this is also essentially surjective, thus induces an equivalence.

\section{Profinite Monoids as a Sub-2-Category of Topoi}\label{sec:duality}

In this section we will show that the 2-functors given in Theorem~\ref{thm:promonoidtotopos} is in fact 2-fully faithful, and restricts to coherent topoi coming from categorical pretopoi,
\[ \ProMon \hook \CTopoi_c \simeq \PTc\op. \]
It is well-known that the classifying topos construction provides a duality $\PT\op \simeq \CTopoi$, and here we use $\CTopoi_c$ to denote the full subcategory of $\CTopoi$ corresponding to $\PTc$ under this duality. In particular, we may call a coherent topos \emph{categorical}, if its corresponding pretopos is categorical. In fact, we will even show that this 2-functor is still 2-fully faithful when post-composed with the (non-full) inclusion $\CTopoi \inj \Topoi$, thus we can identify $\ProMon$ as a full sub-2-category of topoi.

To show that a coherent topos $\Set[\mc M]$ for a profinite monoid $\mc M$ is categorical, we start with the case of $\mc M$ is a \emph{finite discrete} monoid:

\begin{lemma}\label{lem:finitemonoid}
    If $M$ is a finite discrete monoid, then $\Fin[M]$ is a categorical pretopos, such that
    \[ \Mod(\Fin[M]) \simeq \ov{BM}. \]
    In other words, $M$ can be identified as the endomorphism monoid of the canonical model of $\Fin[M]$. In this case the profinite topology on $M$ given in~\cref{prop:profinitemonoid} is discrete. 
\end{lemma}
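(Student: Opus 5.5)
Since $M$ is finite and discrete, every left $M$-set is continuous, so $\Set[M] = [BM,\Set]$ is a presheaf topos. By \cref{prop:contprofincoherent} its coherent objects are exactly $\Fin[M]$, the finite $M$-sets. Hence $\Fin[M]$ is the pretopos corresponding to $\Set[M]$ under $\PT\op \simeq \CTopoi$. The plan has three steps:
\begin{enumerate}
  \item identify $\Mod(\Fin[M])$ with the category of points of $[BM,\Set]$;
  \item show that this category of points is $\ov{BM}$;
  \item read off the canonical model, its endomorphism monoid, and its topology.
\end{enumerate}

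For the first two steps I will use that models of $\Fin[M]$ are the same as points of $\Set[M]$. This holds because $\Set[M]$ is the classifying topos of the coherent theory $\Fin[M]$: coherent functors $\Fin[M]\to\Set$ correspond to coherent geometric morphisms $\Set\to\Set[M]$, and every point of a coherent topos is coherent. Now $[BM,\Set]$ is a presheaf topos, so Diaconescu's theorem gives
\[ \mathrm{Pts}([BM,\Set]) \simeq \mathrm{Flat}(BM\op,\Set). \]
Flat functors are filtered colimits of representables, i.e.\ ind-objects of $\ov{BM}$. Since $M$ is finite, every filtered colimit of representables over $BM$ should be a retract of a representable. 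This gives $\Mod(\Fin[M])\simeq \ov{BM}$.

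I expect this finiteness argument to be the main obstacle. The plan is as follows. A flat $P$ is a filtered colimit of copies of $M$ (the representable), with transition maps given by right multiplication. Pick a chain of elements in the filtered diagram. Finiteness of $M$ forces some element $a$ to recur with an idempotent power $e=a^k$. The colimit is then the splitting of $e$, i.e.\ $Me$ or $eM$ depending on variance. In other words, I will show that the subcategory of $\mathrm{Ind}(\ov{BM})$ of ind-objects is already Cauchy complete and finite-like because $\ov{BM}$ has finitely many objects and finite hom-sets. Alternatively I can cite the known fact that finite categories have $\mathrm{Ind} = $ Cauchy completion.

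The more concrete route is to check directly that the forgetful functor $U : \Fin[M]\to\Fin$ is a model. Limits and colimits of finite $M$-sets are computed on underlying sets, so $U$ is coherent. Its endomorphisms are natural transformations of $U$. These are determined by their component at the representable $M\in\Fin[M]$ (finite, since $M$ is finite), because $U\cong \Fin[M](M,-)$. By Yoneda, $\mathrm{End}(U)\cong M$, acting by right multiplication; this is a monoid anti-/isomorphism, which I will fix by choosing right actions if needed. Combining with the Diaconescu step, every model is a retract of $U$, so $\Fin[M]$ is categorical with canonical model $\star=U$.

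Finally, for the topology: because $\star M = M$ and an element of $\mathcal M=\mathrm{End}(\star)$ is determined by its component at the object $M$, the projection $\pi_M$ restricted to $\mathcal M$ is injective. Therefore each singleton $\{f\}=\mathcal M_{f_M}$ is clopen, and the profinite topology from \cref{prop:profinitemonoid} is discrete.
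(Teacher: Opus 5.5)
Your proposal is correct and follows essentially the paper's route: Diaconescu's theorem plus the fact that $\mathbf{Ind}$ of a finite category is its Cauchy completion. Your chain/idempotent sketch is not a proof as written, so rely on the citation, as the paper does. Your Yoneda computation $\mathrm{End}(U)\cong\mathrm{End}_{\Fin[M]}(M)\op\cong M$ (via left multiplication, so no op-correction is needed) and your clopen-singleton argument just make the last two claims explicit.

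One justification is wrong, though the conclusion you draw from it is right. Models of $\Fin[M]$ correspond to \emph{all} points of $\Set[M]$ by the universal property of the classifying topos. Your claim that every point of a coherent topos is coherent is false in general: the object classifier has points sending its coherent generic object to an infinite set.
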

\begin{proof}
    By~\cref{prop:contprofincoherent}, the classifying topos of $\Fin[M]$ is exactly $\Set[M]$. Since $M$ is discrete, in this case $\Set[M] \simeq [BM,\Set]$ is a presheaf topos. This way, we have
    \[ \Mod(\Fin[M]) \simeq \Topoi(\Set,\Set[M]) \simeq \mb{Flat}(BM\op,\Set) \simeq \mb{Ind}(BM) \simeq \ov{BM}. \]
    Here the first equivalence holds by definition; the second equivalence is given by Diaconescu's theorem (cf.~\cite[B3.2]{johnstone2002sketches}); the third equivalence holds by construction of ind-objects; while the last holds since the ind-completion of a finite category is its Cauchy completion (cf.~\cite[Prop. 2.6]{adamek1994locally}). Hence by~\cref{prop:monoiduptocauchy}, $\Fin[M]$ is categorical, and in fact $M$ is the endomorphism monoid of the canonical model. Any profinite topology on a finite set is discrete, thus the topology on $M$ given in~\cref{prop:profinitemonoid} must be discrete.
\end{proof}

\begin{remark}[Models as idempotents]\label{rem:modelsasidempotents}
    Notice that a functor $BM\op \to \Set$ is equivalently a \emph{right $M$-set}, i.e. a set equipped with a right $M$-action. Under the equivalence in~\cref{lem:finitemonoid}, any an object $\star_e\in\ov{BM}$, viz. an idempotent $e\in M$, corresponds to the right $M$-set $eM$, equipped with the canonical right $M$-action.
\end{remark}

As a first application, we can characterise the geometric morphisms into $\Set[N]$ for a finite discrete monoid $N$:

\begin{proposition}\label{prop:intodiscrete}
  For any profinite monoid $\mc M$ and finite discrete monoid $N$, we have
  \[ \ProMon(\mc M,N) \simeq \CTopoi(\Set[\mc M],\Set[N]). \]
  In fact this is still an equivalence if we replace $\CTopoi$ with $\mb{Topoi}$.
\end{proposition}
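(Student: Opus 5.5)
Since $N$ is finite discrete, $\Set[N] \simeq [BN,\Set]$ is a presheaf topos. By Diaconescu's theorem, geometric morphisms $\Set[\mc M] \to [BN,\Set]$ correspond to flat functors $BN\op \to \Set[\mc M]$; more precisely, to filtering functors, i.e.\ torsors. Concretely, such a functor is a right $N$-set $P$ internal to $\Set[\mc M]$: a continuous left $\mc M$-set with a commuting right $N$-action. It must be flat, i.e.\ filtering in the internal logic of $\Set[\mc M]$. So the plan is to compare three categories:
\begin{itemize}
  \item $\ProMon(\mc M,N)$, whose objects are continuous semigroup maps $\theta$ and whose morphisms are conjugates;
  \item the category of $\mc M$-$N$-bisets that are flat on the $N$-side and continuous on the $\mc M$-side;
  \item $\Topoi(\Set[\mc M],\Set[N])$.
\end{itemize}
Local full faithfulness is already known from \citet[Cor.~5.4.7]{rogers2021toposes}, so what remains is essential surjectivity. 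I would first treat the target $\Topoi$. Then I would observe that every resulting morphism is automatically coherent, because it is (isomorphic to) one of the $\theta^*$ from \cref{thm:promonoidtotopos}, which preserve $\Fin[-]$.

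The key step is to show that every flat continuous biset $P$ is isomorphic to one of the form $eN$, where $e = \theta(1)$ for some $\theta$, with left $\mc M$-action $f\cdot x = \theta(f)x$. The idea is to use global points. The canonical point of $\Set[\mc M]$, namely the forgetful functor $U : \Set[\mc M] \to \Set$, is a geometric morphism $\Set \to \Set[\mc M]$. Composing with it gives a point of $\Set[N]$, so $UP$ is a flat right $N$-set, i.e.\ a model of $\Fin[N]$. By \cref{lem:finitemonoid} and \cref{rem:modelsasidempotents}, $UP \cong eN$ for some idempotent $e \in N$.

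Each $f \in \mc M$ acts on $UP$ by an $N$-equivariant endomorphism. Since $\mathrm{End}_N(eN) \cong eNe$ via left multiplication, this yields a map $\theta : \mc M \to eNe \subseteq N$. This map is multiplicative, and it sends $1$ to $e$, so it is a semigroup morphism with $\theta(1)=e$.

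Continuity of $\theta$ follows from continuity of the $\mc M$-action on the single element $e \in P$. By \cref{lem:continuousaction}, each fibre $\theta\inv(n) = \scomp{f}{f\cdot e = n}$ is open. Since $N$ is finite and discrete, $\theta$ is therefore continuous. One then checks directly that the biset $P$ is exactly the biset associated with $\theta^*$. Indeed, the inverse image $\theta^*$ of \cref{thm:promonoidtotopos}, restricted to the representable $N$, gives $eN$ with this left action, and both geometric morphisms are determined by their values on the representable.

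The main obstacle is the claim that flatness can be tested after applying $U$. We need flatness internal to $\Set[\mc M]$ to coincide with flatness of the underlying $N$-set. One direction is automatic, since $U$ is the inverse image of a geometric morphism and so preserves flatness. For the converse I only need that $UP \cong eN$ as right $N$-sets: this already determines $P$, with its extra $\mc M$-structure encoded by $\theta$. So I would avoid the internal-logic argument entirely. Instead I would argue that the construction $P \mapsto \theta$ lands in $\ProMon(\mc M,N)$, and that $\theta^*$ recovers $P$ up to isomorphism. Compatibility with 2-cells then comes from the locally fully faithful result cited above.
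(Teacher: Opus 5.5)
Your proposal is correct and follows essentially the same route as the paper: Diaconescu's theorem, the canonical point to identify the underlying right $N$-set as $eN$, the map $\theta(f)=f\cdot e$, continuity via the open sets $I^f_e$, agreement with \cref{thm:promonoidtotopos} on the representable, and Rogers' local full faithfulness for the 2-cells. The differences are minor: your identification $\mathrm{End}_N(eN)\cong eNe$ gives multiplicativity of $\theta$ directly, where the paper computes it by hand, and you correctly note that only the easy direction of ``flatness is detected by the canonical point'' is needed, since $\theta^*$ being geometric is already supplied by \cref{thm:promonoidtotopos}.
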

\begin{proof}
  As mentioned at the end of Section~\ref{sec:toposofprofmonoid}, it suffices to show that any coherent functor is induced by a continuous semigroup map. By Diaconescu's theorem again, we have 
  \[ \Topoi(\Set[\mc M],\Set[N]) \simeq \mb{Flat}(N\op,\Set[\mc M]). \]
  where for any geometric morphism $\theta : \Set[\mc M] \to \Set[N]$, the corresponding flat functor is obtained by composing with the Yoneda embedding $BN\op \hook \Set[N] \to \Set[\mc M]$. Concretely, it sends the representable object $N$ to $\theta^*N$.

  Now since $\Set[\mc M]$ has a surjective point, whose inverse image is the forgetful functor
  \[ \geo- : \Set[\mc M] \to \Set, \]
  a functor $F : BN\op \to \Set[\mc M]$ is flat iff $\geo- \circ F$ is flat. But we have classified all flat functors from $BN\op$ to $\Set$ in~\cref{lem:finitemonoid},
  \[ \mb{Flat}(BN\op,\Set) \simeq \ov{BN}. \]
  Thus by~\cref{rem:modelsasidempotents}, we have $\geo{\theta^*N} \cong eN$ for some idempotent element $e\in N$, which is finite. Hence, any such $\theta$ is automatically coherent, and we have
  \[ \Topoi(\Set[\mc M],\Set[N]) \simeq \CTopoi(\Set[\mc M],\Set[N]). \]
  
  Furthermore, by construction $\theta^*N$ has an $\mc M$-action, where now we can define a semigroup homomorphism $\theta : \mc M \to N$ by
  \[ \theta(f) \coloneq f \cdot e. \]
  To show this is a semigroup morphism, consider $f,g \in \mc M$. By definition, $\theta(f) = f \cdot e = ea$ and $\theta(g) = g \cdot e = eb$ for some $a,b\in N$. This way, since acting on the right for any element in $N$ is a morphism of $\mc M$-set on $\theta^*N$, we have
  \[ \theta(fg) = fg \cdot e = f \cdot (g \cdot e) = f \cdot (eb) = (f \cdot e)b = eab. \]
  By the same reasoning, acting on the right by $e$ gives us
  \[ eae = (f \cdot e)e = f \cdot (ee) = f \cdot e = ea. \]
  It follows that
  \[ \theta(fg) = eab = eaeb = \theta(f)\theta(g), \]
  which implies $\theta$ thus defined is indeed a semigroup morphism. To show $\theta$ is continuous, it suffices to consider inverse images of a point $x \in \theta^*N$. By definition,
  \[ \theta\inv(x) = \scomp{f\in\mc M}{f \cdot e = x}. \]
  If $\theta\inv(x)$ is empty, then evidently it is open. If not, then we can choose $f\in\theta\inv(x)$, and observe that
  \[ I^f_e = \scomp{g\in\mc M}{f \cdot e = g \cdot e = x} = \theta\inv(x), \]
  hence is open as well. 

  It is then straightforward to verify that the geometric morphism induced by the semigroup morphism $\theta : \mc M \to N$ given in~\cref{thm:promonoidtotopos} agrees with $\theta^*$ on the generator $N\in\Set[N]$. This implies they coincide as geometric morphisms since $\Set[N]$ is generated by $N$ under colimits. This implies that 
  \[ \ProMon(\mc M,N) \to \Topoi(\Set[\mc M],\Set[N]) \]
  is essentially surjective, thus gives an equivalence as explained at the end of~\cref{sec:toposofprofmonoid}.
\end{proof}

Using this, we can now show the main result of this section:

\begin{theorem}\label{thm:fullyfaithfulpromonoidtocoh}
  The 2-functor $\Set[-]$ constructed in~\cref{thm:promonoidtotopos} is 2-fully faithful,
  \[ \Set[-] : \ProMon \hook \CTopoi. \]
  It is still 2-fully faithful when post-composed with the inclusion $\CTopoi \inj \Topoi$.
\end{theorem}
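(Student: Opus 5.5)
Since \citet[Cor. 5.4.7]{rogers2021toposes} already gives local full faithfulness, it remains to show essential surjectivity on hom-categories: every geometric morphism $\theta : \Set[\mc M] \to \Set[\mc N]$ between classifying topoi of profinite monoids is isomorphic to one induced by a continuous semigroup morphism. The plan reduces this to \cref{prop:intodiscrete} by writing $\mc N \cong \lt_{i\in I}N_i$ as a cofiltered limit of finite discrete monoids, with surjective continuous monoid maps $p_i : \mc N \surj N_i$. The key intermediate claim is that $\Set[\mc N]$ is the corresponding pseudo-limit of topoi, $\Set[\mc N] \simeq \lt_{i}\Set[N_i]$. At the level of pretopoi this says that $\Fin[\mc N]$ is the filtered pseudo-colimit of the $\Fin[N_i]$. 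This follows from \cref{prop:compactobjectsincontinuousaction} and \cref{lem:continuousquotientsprofinite}. Every finite continuous $\mc N$-set is a quotient of some finite $\coprod M_i$, so its action factors through some $N_i$. A $\mc N$-equivariant map between two finite $N_i$-sets is already $N_i$-equivariant because $p_i$ is surjective. Since $\CTopoi \simeq \PT\op$ turns filtered colimits of pretopoi into cofiltered limits of coherent topoi, we get the limit description in $\CTopoi$. For the $\Topoi$ version, I would invoke the fact that a cofiltered limit of coherent topoi along coherent morphisms is also the limit in $\Topoi$; this is the standard result in \cite[B4.2, D3.3]{johnstone2002sketches}.

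Given $\theta$, compose with the projections $\Set[p_i] : \Set[\mc N] \to \Set[N_i]$. By \cref{prop:intodiscrete}, each composite $\Set[p_i]\circ\theta$ is isomorphic to $\Set[\theta_i]$ for a continuous semigroup morphism $\theta_i : \mc M \to N_i$. Local full faithfulness makes these choices compatible: for each transition $N_j \to N_i$, the isomorphism between the induced geometric morphisms comes from a unique invertible conjugate. So the $\theta_i$ form a pseudo-cone over the diagram $(N_i)$ in $\ProMon$. The main obstacle is strictifying this pseudo-cone so that it determines a single map into the strict limit $\mc N = \lt N_i$, since conjugation by units in the $N_i$ may twist the components. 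I would first try to use that each hom-category $\ProMon(\mc M,N_i)$ is small, with finitely many conjugacy data at each stage. Using compactness, a König-style argument over the cofiltered system would pick representatives $\theta_i'\cong\theta_i$ that commute strictly with the transition maps. The first attempt would be to show that the sets of valid choices at each finite stage are nonempty finite and form an inverse system; these sets consist of a representative plus compatible invertible conjugates. Alternatively, the finite set of invertible conjugates can be topologised profinitely, and nonempty profinite limits are nonempty. A strictly compatible family yields a semigroup morphism $\bar\theta : \mc M \to \mc N$, which is continuous because each component is continuous and $\mc N$ carries the limit topology.

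Finally, $\Set[\bar\theta]$ and $\theta$ agree after composition with every projection, up to coherent isomorphisms. Since $\Set[\mc N]$ is the pseudo-limit, the two geometric morphisms are isomorphic. Hence $\ProMon(\mc M,\mc N) \to \Topoi(\Set[\mc M],\Set[\mc N])$ is essentially surjective. Combined with local full faithfulness, it is an equivalence. The $\CTopoi$ statement follows as well, because every such $\theta$ is then isomorphic to a coherent one, and coherence is stable under isomorphism.
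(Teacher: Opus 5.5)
Your proposal is correct and takes the same route as the paper. You reduce to essential surjectivity via Rogers' local full faithfulness, identify $\Set[\mc N]$ with $\lt_{i\in I}\Set[N_i]$ through $\Fin[\mc N]\simeq\ct_{i\in I}\Fin[N_i]$ (plus the standard fact that such cofiltered limits are also computed in $\Topoi$), and apply \cref{prop:intodiscrete} to each $N_i$.

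Your K\"onig-style strictification of the pseudo-cone goes further than the paper. You take the finite nonempty sets of representatives $\theta_i'\cong\theta_i$ with compatible invertible conjugates, and use that their cofiltered limit is nonempty. This is exactly what justifies the paper's step $\lt_{i\in I}\ProMon(\mc M,N_i)\simeq\ProMon(\mc M,\lt_{i\in I}N_i)$. The paper asserts that step without proof, even though the left-hand side is a pseudo-limit of hom-categories while $\mc N$ is only a strict limit of monoids.
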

\begin{proof}
  Again, as mentioned at the end of Section~\ref{sec:toposofprofmonoid}, it suffices to show any geometric morphism is induced by a semigroup morphism between profinite monoids. The crucial observation is that, given any profinite monoid $\mc N$, by viewing it as a cofiltered limit of finite monoids $\mc N \cong \lt_{i\in I}N_i$, we then have an induced family of inclusions of categories,
  \[ \set{\Fin[N_i] \hook \Fin[\mc N]}_{i\in I}. \]
  Notice that each inclusion $\Fin[N_i] \hook \Fin[\mc N]$ is indeed fully faithful, with their union being $\Fin[\mc N]$. It follows that $\Fin[\mc N] \cong \ct_{i\in I}\Fin[N_i]$ is the filtered colimit of these pretopoi. By the duality $\mb{PT}\op \simeq \CTopoi$, we have
  \[ \Set[\mc N] \simeq \lt_{i\in I}\Set[N_i], \]
  in the 2-category $\CTopoi$. Thus, we must have
  \begin{align*}
    \CTopoi(\Set[\mc M],\Set[\mc N]) 
    &\simeq \lt_{i\in I}\CTopoi(\Set[\mc M],\Set[N_i])) \\
    &\simeq \lt_{i\in I}\ProMon(\mc M,N_i) \\
    &\simeq \ProMon(\mc M,\lt_{i\in I}N_i) \\
    &\simeq \ProMon(\mc M,\mc N)
  \end{align*}
  In fact, it follows from~\citet[Thm 8.3.13]{SGA4} that cofiltered limits of coherent topoi and coherent geometric morphisms are computed in $\Topoi$, thus the limit $\Set[\mc N] \simeq \lt_{i\in I}\Set[N_i]$ is also a cofiltered limit in $\Topoi$. Also by Proposition~\ref{prop:intodiscrete}, any geometric morphism from $\Set[\mc M]$ to $\Set[N]$ is automatically coherent, thus the above equivalences also hold when we replace $\CTopoi$ with $\Topoi$.
\end{proof}

As a corollary, we can show that the embedding $\Set[-] : \ProMon \hook \CTopoi$ indeed factors through $\CTopoi_c$:

\begin{corollary}\label{cor:promoncategorical}
  Any coherent topos of the form $\Set[\mc M]$ is categorical. More concretely, we have
  \[ \Topoi(\Set,\Set[\mc M]) \simeq \ov{B\mc M}. \]
\end{corollary}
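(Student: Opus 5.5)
The plan is to read off the points of $\Set[\mc M]$ from the 2-fully faithful embedding of~\cref{thm:fullyfaithfulpromonoidtocoh}. The one-point topos $\Set$ is itself in the image of $\Set[-]$: taking the trivial monoid $1$ (which is profinite), we have $\Set[1] \simeq \Set$. Applying~\cref{thm:fullyfaithfulpromonoidtocoh}, in the version post-composed with $\CTopoi \inj \Topoi$, we obtain
\[ \Topoi(\Set,\Set[\mc M]) \simeq \Topoi(\Set[1],\Set[\mc M]) \simeq \ProMon(1,\mc M). \]
Working in $\Topoi$ is essential here, since points need not a priori be coherent geometric morphisms.

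The next step is to compute $\ProMon(1,\mc M)$ directly. A semigroup morphism $\theta : 1 \to \mc M$ is determined by $e = \theta(1)$, and multiplicativity forces $ee = e$. Continuity is automatic, since $1$ is discrete. So the 1-cells are exactly the idempotents $e\in\mc M$.

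Next come the 2-cells. A conjugate $a : e \to d$ is an element $a\in\mc M$ with $ae = a = da$. Composition of conjugates is multiplication in $\mc M$, and the identity on $e$ is $e$ itself. Comparing with~\cref{rem:Cauchyofmonoid}, this category is precisely $\ov{B\mc M}$, with $e$ corresponding to $\star_e$. Combining the two displays gives $\Topoi(\Set,\Set[\mc M]) \simeq \ov{B\mc M}$. I would double-check that the direction of conjugates matches the direction of morphisms in $\ov{B\mc M}$; if it comes out reversed, I would note that the 2-functor of~\cref{thm:promonoidtotopos} may be contravariant on 2-cells and adjust accordingly, so that the final equivalence holds on the nose.

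Finally, categoricity. Under the duality $\CTopoi \simeq \PT\op$, points of the coherent topos $\Set[\mc M]$ correspond to models of the pretopos $\Fin[\mc M]$ of coherent objects (\cref{prop:contprofincoherent}). Hence $\Mod(\Fin[\mc M]) \simeq \ov{B\mc M}$, and~\cref{prop:monoiduptocauchy} shows that $\Fin[\mc M]$ is categorical, with canonical model $\star_1$, i.e.\ the forgetful point. The only substantive input is~\cref{thm:fullyfaithfulpromonoidtocoh}, applied with the trivial domain monoid. The main subtlety is whether that theorem really covers non-coherent geometric morphisms out of $\Set$; its statement for $\Topoi$ guarantees this, so the rest is bookkeeping.
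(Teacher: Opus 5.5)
Your proposal is correct and is essentially the paper's proof: you identify $\Set \simeq \Set[1]$, apply the $\Topoi$-version of \cref{thm:fullyfaithfulpromonoidtocoh} to get $\ProMon(1,\mc M)$, and read this off as idempotents with conjugates, i.e.\ $\ov{B\mc M}$. The direction you worried about matches on the nose, since $ae = a = da$ is exactly the condition for $a : \star_e \to \star_d$, and your final paragraph deducing categoricity of $\Fin[\mc M]$ via \cref{prop:monoiduptocauchy} just makes explicit a step the paper leaves implicit (strictly, identifying all points with $\Set$-valued models of $\Fin[\mc M]$ is the classifying-topos property rather than the duality $\CTopoi \simeq \PT\op$, which only sees coherent points, but this is harmless).
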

\begin{proof}
  Notice that $\Set \simeq \Set[1]$ for the terminal monoid $1$. Hence, by Theorem~\ref{thm:fullyfaithfulpromonoidtocoh} we have the equivalence
  \[ \mb{Topoi}(\Set,\Set[\mc M]) \simeq \ProMon(1,\mc M) \simeq \ov{BM}. \]
  The final equivalence is due to the fact that semigroup morphisms from $1$ to $\mc M$ can be identified with idempotents in $\mc M$. Also, conjugacies are exactly morphisms in the Cauchy completion.
\end{proof}

\section{Duality for Categorical Theories}\label{sec:duality_for_categorical_theories}

The goal in this section is to complete the proof of the following duality,
\[ \PTc\op \simeq \CTopoi_c \simeq \ProMon. \]
For this, we need to show that the 2-fully faithful embedding $\ProMon \hook \CTopoi_c$ we have obtained in~\cref{sec:duality} is also essentially surjective. We will see in this section that this can be obtained as a corollary of \emph{weak conceptual completeness} of pretopoi.

Recall that Section~\ref{sec:profmonoidofcattheory} has shown any categorical pretopos $\mc C$ has a canonical profinite monoid $\mc M$ associated to it, where $\mc M$ is the endomorphism monoid for the canonical model $\star$ for $\mc C$. Notice that for any $\varphi\in\mc C$ the canonical model evaluated at $\varphi$, viz. the set $\star\varphi$, has a natural left $\mc M$-action, since $\mc M$ by definition is the endomorphism monoid of $\star$. In fact, this lifts to a functor as follows:

\begin{lemma}\label{lem:comparisonfunctor}
  For a categorical pretopos $\mc C$ with canonical model $\star$ and $\mc M \coloneq \Mod(\mc C)(\star,\star)$, there is a coherent functor $U : \mc C \to \Fin[\mc M]$ making the following diagram commutes,
  \[
  \begin{tikzcd}
    \mc C \ar[dr, tail, "\star"'] \ar[rr, "U"] & & \Fin[\mc M] \ar[dl, tail, "\geo-"] \\
    & \Fin &
  \end{tikzcd}
  \]
\end{lemma}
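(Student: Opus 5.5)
The plan is to define $U$ directly from the $\mc M$-action on the canonical model, check that each $U\varphi$ is a continuous finite $\mc M$-set, and then deduce coherence of $U$ from that of $\star$.

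\textbf{Definition of $U$.} For $\varphi\in\mc C$, let $U\varphi$ be the finite set $\star\varphi$ with left action $f\cdot x \coloneq f_\varphi(x)$ for $f\in\mc M$. This is a left action because composition in $\mc M$ is computed componentwise and $1_\varphi=\id$. For $\alpha:\varphi\to\psi$, set $U\alpha\coloneq\star\alpha$. This map is $\mc M$-equivariant precisely by the naturality of each $f\in\mc M$: $\star\alpha\circ f_\varphi = f_\psi\circ\star\alpha$. Functoriality of $U$ is inherited from $\star$, and $\geo-\circ U=\star$ holds on the nose.

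\textbf{Continuity.} Next I check that each $U\varphi$ lies in $\Fin[\mc M]$, i.e.\ that the action is continuous, using \cref{lem:continuousaction}. For $x\in\star\varphi$ and $f\in\mc M$,
\[ I^f_x=\scomp{g\in\mc M}{g_\varphi(x)=f_\varphi(x)} = \bigcup_{h} \mc M_h, \]
where $h$ ranges over the finitely many $h\in\Fin(\star\varphi,\star\varphi)$ with $h(x)=f_\varphi(x)$. Each $\mc M_h$ is clopen in the topology of \cref{prop:profinitemonoid}, so $I^f_x$ is clopen. Hence $U\varphi\in\Fin[\mc M]$.

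\textbf{Coherence.} It remains to show $U$ is coherent: it preserves finite limits, finite unions of subobjects (equivalently images and finite joins), and effective quotients and finite coproducts. The key point is that $\geo-:\Fin[\mc M]\to\Fin$ creates all of these. Finite limits in $\Set[\mc M]$ are computed on underlying sets, as noted in the proof of \cref{prop:contprofincoherent}. Colimits in $[B\mc M,\Set]$ are also computed on underlying sets, and $\Set[\mc M]$ is closed under subobjects and quotients, hence under finite colimits; so finite colimits and image factorisations in $\Fin[\mc M]$ are computed as in $\Fin$ as well. Finally, $\geo-$ is faithful and conservative on $\Fin[\mc M]$, since an equivariant bijection has an equivariant inverse. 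So for any finite limit or colimit diagram in $\mc C$ preserved by $\star$, the comparison map in $\Fin[\mc M]$ for the $U$-image becomes an isomorphism after applying $\geo-$, and is therefore an isomorphism. Because $\star$ is coherent, $U$ is coherent.

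\textbf{Main obstacle.} The only delicate point is this last step. One must verify that the canonical $\mc M$-set structure on $\lim\star\varphi_i$ (respectively the colimit, the image) matches the one induced by $U$. This amounts to checking that the comparison map is equivariant, which holds because it is built from the maps $\star\alpha$, each of which is equivariant. I would argue this uniformly by saying that $\geo-$ creates finite limits and colimits, rather than checking each case separately.
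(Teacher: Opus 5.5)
Your proposal is correct and follows the paper's proof. You equip $\star\varphi$ with the evident $\mc M$-action, get continuity by writing $I^f_x$ as a finite union of the clopens $\mc M_h$, and deduce coherence of $U$ from that of $\star$ because $\geo-$ creates finite limits, finite colimits and images; you just supply more detail (equivariance of $\star\alpha$ via naturality, conservativity of $\geo-$) than the paper's two-line argument.
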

\begin{proof}
  For any $x\in\star\varphi$ and any $f\in\mc M$, by definition
  \[ I^f_x \coloneq \scomp{g\in\mc M}{fx = gx} = \scomp{g\in\mc M}{f_\varphi x = g_\varphi x}, \]
  which is clopen in $\mc M$. Hence, $\star\varphi$ with the $\mc M$-action induced by endomorphisms of $\star$ belongs to $\Fin[\mc M]$, which we will denote as $U\varphi$. $U$ is coherent because $\star$ preserves the coherent structures and $\geo-$ creates them.
\end{proof}

To show that $U$ is an equivalence of categories, we recall the weak conceptual completeness established in~\citet{alma991006251722103606}: a coherent functor $\theta : \mc C \to \mc D$ between two pretopoi is an equivalence iff $\theta^* : \Mod(\mc D) \to \Mod(\mc C)$ is an equivalence of categories. We use this to show that $U$ in~\cref{lem:comparisonfunctor} is an equivalence:

\begin{proposition}\label{prop:compequivalence}
  For any categorical pretopos $\mc C$ with $\mc M$ the associated profinite monoid, the functor constructed in~\cref{lem:comparisonfunctor} is an equivalence,
  \[ U : \mc C \simeq \Fin[\mc M]. \]
\end{proposition}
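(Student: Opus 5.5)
By the weak conceptual completeness recalled just above, it suffices to show that
\[ U^* : \Mod(\Fin[\mc M]) \to \Mod(\mc C) \]
is an equivalence of categories. The plan is to identify both sides with $\ov{B\mc M}$ and check that, under these identifications, $U^*$ becomes (isomorphic to) the identity. For the domain, \cref{cor:promoncategorical} together with the duality $\PT\op \simeq \CTopoi$ and \cref{prop:contprofincoherent} gives
\[ \Mod(\Fin[\mc M]) \simeq \Topoi(\Set,\Set[\mc M]) \simeq \ov{B\mc M}. \]
For the codomain, \cref{prop:monoiduptocauchy} gives $\Mod(\mc C) \simeq \ov{B\mc M}$, induced by the full inclusion $B\mc M \hook \Mod(\mc C)$ sending the object to $\star$.

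Both $\ov{B\mc M}$-descriptions are Cauchy completions of $B\mc M$. It therefore suffices to check that $U^*$ restricts, on the full subcategory $B\mc M$, to the identity up to isomorphism: a functor between Cauchy complete categories that is fully faithful on a dense family of objects of which every object is a retract is fully faithful and essentially surjective. Concretely, I will proceed in three steps.

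First, I note that the point of $\Set[\mc M]$ corresponding to the identity idempotent $1$ is the forgetful functor $\geo- : \Fin[\mc M] \to \Fin$. Indeed, under $\ProMon(1,\mc M)$ the semigroup map $1 \mapsto 1$ induces the base change $\theta^*X = 1X = X$, which is just the underlying set.

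Second, by \cref{lem:comparisonfunctor} we have $U^*\geo- = \geo- \circ U = \star$, so $U^*$ sends this point to the canonical model.

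Third, I will check that $U^*$ is bijective on endomorphisms of this point. An endomorphism of $\geo-$ in $\Mod(\Fin[\mc M])$ is, via \cref{cor:promoncategorical}, a conjugate $a : \id \to \id$, that is, an element $a \in \mc M$. It acts on each $\geo X$ by $x \mapsto a\cdot x$, which is natural because morphisms in $\Fin[\mc M]$ are equivariant. Restricting along $U$ gives the natural transformation of $\star$ whose component at $\varphi$ is $x \mapsto a\cdot x = a_\varphi(x)$, namely $a$ itself. Hence the map $\mc M \to \mc M$ induced by $U^*$ on these endomorphism monoids is the identity.

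Consequently $U^*$ is fully faithful on the generating object and, since functors preserve retracts, it is fully faithful on all of $\ov{B\mc M}$. Every object of $\Mod(\mc C)$ is a retract of $\star = U^*\geo-$, so splitting the corresponding idempotent in $\Mod(\Fin[\mc M])$ (which is Cauchy complete) shows that $U^*$ is essentially surjective. Weak conceptual completeness then gives that $U$ is an equivalence.

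The main obstacle is the identification in the third step: I must make sure that the equivalence $\Topoi(\Set,\Set[\mc M]) \simeq \ProMon(1,\mc M)$ from \cref{thm:fullyfaithfulpromonoidtocoh} really sends the conjugate $a$ to the natural transformation given by the action of $a$, rather than some twisted version. I would verify this directly from the definition of the 2-cells in \cref{thm:promonoidtotopos} (following Rogers): a conjugate $a : \theta \to \theta'$ induces $\theta^*X \to \theta'^*X$, $x \mapsto a x$. Fullness then also follows directly, since every natural endomorphism of $\geo-$ is already one of these. If that bookkeeping becomes delicate, the alternative is to argue fullness directly: a natural endomorphism $\eta$ of $\geo-$ on $\Fin[\mc M]$ is determined by its components on the finite quotients $M_i$, evaluated at the class of $1$. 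These components are compatible across $i$ and so define an element of $\lt_i M_i = \mc M$ whose action is $\eta$.
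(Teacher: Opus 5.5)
Your proposal is correct and follows the same route as the paper. You apply weak conceptual completeness to $U^*$, use \cref{lem:comparisonfunctor} to see that $U^*$ sends $\geo-$ to $\star$ and induces the identity on the endomorphism monoid $\mc M$, and then extend to all objects because both model categories are Cauchy completions of $B\mc M$. You also make explicit two steps the paper calls ``evident'': the lifting and splitting of idempotents needed for essential surjectivity, and the check that every natural endomorphism of $\geo-$ is the action of a unique element of $\mc M$.
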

\begin{proof}
  Consider the functor on models induced by $U$,
  \[ U^* : \Mod(\Fin[\mc M]) \simeq \ov{B\mc M} \to \Mod(\mc C) \simeq \ov{B\mc M}. \]
  By the commutativity of $U$ given in Lemma~\ref{lem:comparisonfunctor}, $U^*$ sends the canonical point in $\ov{B\mc M}$ to the canonical point in $\ov{B\mc M}$, thus $U^*$ is essentially surjective. Fully faithfulness is evident, since by construction endomorphisms on $\star$ and on $\geo-$ coincide when precompose with $U$. By the weak conceptual completeness, $U$ itself is an equivalence.
\end{proof}

Finally, we can state the main theorem of this paper:

\begin{theorem}\label{thm:main}
  There is a duality between the 2-category of categorical pretopoi and the 2-category of profinite monoids. Furthermore, it restricts to a duality between Boolean categorical pretopoi and profinite groups,
  \[
  \begin{tikzcd}
    \mb{BPT}_c\op \ar[r, "\simeq"] \ar[d, hook] & \mb{ProGrp} \ar[d, hook] \\
    \PTc\op \ar[r, "\simeq"] & \ProMon.
  \end{tikzcd}
  \]
\end{theorem}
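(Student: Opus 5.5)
The argument assembles the results already established, under the identification $\PT\op \simeq \CTopoi$. By \cref{thm:promonoidtotopos} and \cref{thm:fullyfaithfulpromonoidtocoh}, $\Set[-] : \ProMon \to \CTopoi$ is a 2-fully faithful 2-functor. By \cref{cor:promoncategorical} it lands in $\CTopoi_c \simeq \PTc\op$; concretely, $\mc M \mapsto \Fin[\mc M]$ is a 2-fully faithful 2-functor $\ProMon \to \PTc\op$. A 2-functor that is 2-fully faithful, i.e.\ an equivalence on each hom-category, and essentially surjective on objects up to equivalence is a biequivalence. So it remains to prove essential surjectivity. For a categorical pretopos $\mc C$ with associated profinite monoid $\mc M$ from \cref{prop:profinitemonoid}, \cref{prop:compequivalence} gives $U : \mc C \simeq \Fin[\mc M]$. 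Hence every object of $\PTc$ is equivalent to one in the image, and we obtain $\PTc\op \simeq \ProMon$. I would also note, though it is not needed, that the 2-functor $\PTc\op \to \ProMon$ of \cref{sec:profmonoidofcattheory} is a pseudo-inverse: $\Fin[\mc M]$ has canonical model $\geo-$ with endomorphism monoid $\mc M$, by \cref{cor:promoncategorical} together with \cref{prop:canmodelunique}.

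For the restriction, I must check that under this equivalence $\mc C$ is Boolean iff $\mc M$ is a group, and then that the square commutes. Since $\ProMon$ is a full sub-2-category and $\mb{BPT}_c$ is full in $\PTc$, the restriction is then automatically an equivalence, and commutativity is by construction.

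If $\mc C$ is Boolean, \cref{cor:booleancategorical} shows that the endomorphism monoid of $\star$, namely $\mc M$, is a group. Conversely, suppose $\mc M$ is a profinite group $\mc G$. I would show that $\Fin[\mc G]$ is Boolean directly. Take a finite continuous $\mc G$-set $X$ and a sub-$\mc G$-set $A \subseteq X$. The complement $X\setminus A$ is also closed under the action: if $gx \in A$, then $x = g\inv g x \in A$. So $X\setminus A$ is a continuous sub-$\mc G$-set, and it is a complement of $A$ in $\sub(X)$, because subobjects in $\Fin[\mc G]$ are sub-$\mc G$-sets, with unions and intersections computed as for sets. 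This makes every subobject lattice Boolean, so $\Fin[\mc G]$ is a Boolean pretopos. Since $\mc C \simeq \Fin[\mc M]$, Booleanness transfers along the equivalence.

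The main obstacle is the essential-surjectivity step, but it has already been isolated in \cref{prop:compequivalence} through weak conceptual completeness. What remains here is bookkeeping. The one subtlety is that the equivalence is built on $\CTopoi_c$ and on a choice of canonical models. I would therefore phrase the result as a biequivalence given by the canonical 2-functor $\mc M \mapsto \Fin[\mc M]$, avoiding any dependence on those choices.
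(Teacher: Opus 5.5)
Your proposal is correct and follows essentially the same route as the paper: 2-full faithfulness of $\Set[-]$ into $\CTopoi_c \simeq \PTc\op$ comes from \cref{thm:fullyfaithfulpromonoidtocoh} and \cref{cor:promoncategorical}, essential surjectivity from \cref{prop:compequivalence}, and the Boolean restriction from \cref{cor:booleancategorical} together with Booleanness of $\Fin[\mc G]$, which you verify explicitly (the complement of a sub-$\mc G$-set is closed under the action) where the paper only asserts it. One slip: in the restriction step you should say that $\mb{ProGrp}$ is full in $\ProMon$, not that $\ProMon$ is full; this holds because a semigroup morphism into a group must send $1$ to the only idempotent, namely $1$.
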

\begin{proof}
  By Theorem~\ref{thm:fullyfaithfulpromonoidtocoh} and Corollary~\ref{cor:promoncategorical}, there is a 2-fully faithful embedding
  \[ \ProMon \hook \CTopoi_c \simeq \PTc\op. \]
  Proposition~\ref{prop:compequivalence} then implies that this is also essentially surjective, thus an equivalence. Now by Corollary~\ref{cor:booleancategorical}, if the categorical theory is Boolean then its associated profinite monoid is in fact a profinite group. On the other hand, for a profinite group $\mc G$, $\Fin[\mc G]$ is indeed a Boolean pretopos. Hence, this induces a duality $\mb{BPT}_c\op \simeq \mb{ProGrp}$.
\end{proof}

\bibliographystyle{apalike}
\bibliography{mybib}

\appendix

\section{Some results on model theory}\label{sec:some_results_in_model_theory}
For coherent logic, the completeness theorem can be seen as a consequence of Deligne's theorem, that coherent topoi have enough points (cf.~\cite{SGA4}):

\begin{theorem}[Completeness]\label{thm:complete}
  For any pretopos $\mc C$ and any $\varphi \inj \psi$ in $\mc C$, if for all model $M$ of $\mc C$, the image of this mono under $M$ is an isomorphism, then itself must be an isomorphism.
\end{theorem}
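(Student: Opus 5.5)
The plan is to deduce completeness from Deligne's theorem, as the paper indicates. Let $\mc C$ be a pretopos and let $m : \varphi \inj \psi$ be a mono such that $Mm$ is an isomorphism for every model $M : \mc C \to \Set$. I will pass to the classifying topos $\Set[\mc C] = \mr{Sh}(\mc C, J_{coh})$, the category of sheaves for the coherent topology (finite jointly epimorphic families). Let $y : \mc C \to \Set[\mc C]$ be the Yoneda embedding followed by sheafification. Two standard facts will be used.

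\begin{enumerate}
\item Since the coherent topology on a pretopos is subcanonical, every representable is already a sheaf. Hence $y$ is fully faithful and reflects isomorphisms.
\item The category of points $\Topoi(\Set,\Set[\mc C])$ is equivalent to the category of coherent functors $\mc C \to \Set$, i.e. to $\Mod(\mc C)$. The equivalence sends a point $p$ to $p^* \circ y$.
\end{enumerate}

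The key steps are as follows.

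First, $y$ is coherent, so $ym : y\varphi \inj y\psi$ is a mono in $\Set[\mc C]$.

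Second, for every point $p$ the map $p^*(ym) = M(m)$, where $M = p^*y$, is an isomorphism by hypothesis.

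Third, I invoke Deligne's theorem: $\Set[\mc C]$ is a coherent topos, so it has enough points. That is, the family of inverse image functors $p^*$ is jointly conservative, so a morphism inverted by every $p^*$ is an isomorphism. Hence $ym$ is an isomorphism in $\Set[\mc C]$.

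Finally, full faithfulness of $y$ shows that $m$ is an isomorphism in $\mc C$.

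The substantive input is Deligne's theorem itself, and I will quote it from SGA4 rather than reprove it. If a self-contained argument were wanted, the hardest step would be exactly this one. There I would first reduce to the case of a countable pretopos via a filtered union argument. I would then build a model separating $\varphi$ from $\psi$: take an element of $\psi \setminus \varphi$, in the sense of a generic point over the sub-pretopos obtained by localising at $\psi$, and apply a Henkin/Rasiowa–Sikorski-style construction of a prime filter meeting countably many covering families.

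The remaining verifications are routine: subcanonicity, and the identification of points with models through Diaconescu's theorem applied to a site with finite limits.
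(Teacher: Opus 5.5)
Your argument is correct and is essentially the paper's: the paper just invokes Deligne's theorem that coherent topoi have enough points, and you have spelled out the routine translation. Concretely, subcanonicity makes $y$ fully faithful, each point $p$ of $\Set[\mc C]$ yields the model $p^*\circ y$, and joint conservativity of the $p^*$ forces $ym$, hence $m$, to be invertible. Your optional self-contained sketch is looser than the rest: the slice $\mc C/\psi$ is an extension of $\mc C$, not a sub-pretopos, and passing from countable sub-pretopoi back to $\mc C$ needs a compactness or ultraproduct step, since a model of a sub-pretopos need not extend. The main proof does not rely on that sketch.
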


A useful reformulation of completeness is compactness (cf.~\cite{hodges1997shorter}):

\begin{theorem}[Compactness]\label{thm:compact}
  Given a first-order theory $T$ in the sense of specifying sorts, function symbols, relation symbols, and axioms, if any finite subset of $T$ has a model, then $T$ also has a model.
\end{theorem}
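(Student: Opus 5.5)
We prove the Compactness Theorem for a (possibly many-sorted) first-order theory $T$ by reducing it to the Completeness Theorem~\ref{thm:complete}, via the classifying pretopos of $T$. Since $T$ is classical, we first make it coherent by Morleyisation. We add, for each first-order formula, a new relation symbol together with coherent axioms forcing it to be equivalent to the formula, and for each negation a complementing symbol with axioms $R\wedge\bar R\vdash\bot$ and $\top\vdash R\vee\bar R$. This yields a coherent theory $T^m$ whose $\Set$-models are exactly the models of $T$. Each axiom of $T$ becomes a single coherent sequent, and each finite subset of $T$ has finitely many such associated sequents.

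Let $\mc C_{T}$ denote the syntactic pretopos of $T^m$. It is the pretopos completion of the coherent syntactic category, and coherent functors $\mc C_T\to\Set$ correspond to models of $T$. A theory has no models iff its syntactic pretopos is degenerate, i.e. $0\inj 1$ is an isomorphism, i.e. $\top\vdash\bot$ is provable. This follows from Theorem~\ref{thm:complete} applied to the mono $0\inj 1$. If there are no models, the condition of the theorem holds vacuously, so $0\cong 1$ in $\mc C_T$, which means $\top\vdash\bot$ is derivable in the coherent sequent calculus.

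The key step is finiteness of derivations. A derivation of $\top\vdash\bot$ in $T^m$ is a finite proof tree. It therefore uses only finitely many axioms of $T^m$, and hence only finitely many of the original axioms of $T$, say $T_0$, together with the finitely many Morleyisation axioms they involve. Thus $T_0^m$ already proves $\top\vdash\bot$. By soundness, $T_0^m$ has no $\Set$-model, so $T_0$ has no model. Taking the contrapositive, if every finite subset of $T$ has a model, then so does $T$.

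The main obstacle is justifying that the pretopos $\mc C_T$ and the degeneracy condition $0\cong 1$ are faithfully captured by derivability in the sequent calculus, so that finiteness of proofs applies. This is the standard construction of the syntactic category in \cite[D1]{johnstone2002sketches}. I would cite it rather than reprove it.

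An alternative avoids syntax: write $\mc C_T$ as the filtered colimit of the pretopoi $\mc C_{T_0}$ over finite $T_0\subseteq T$. Then $0\cong 1$ in the colimit already holds at some finite stage, since isomorphisms in a filtered colimit of categories are witnessed at a finite stage.
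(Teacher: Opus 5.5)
Your proof is correct and follows the route the paper intends. The paper gives no argument of its own: it only calls compactness a reformulation of completeness (Theorem~\ref{thm:complete}) and cites Hodges. You make that reformulation explicit through Morleyisation, the syntactic pretopos, and Theorem~\ref{thm:complete} applied to the mono $0\inj 1$, and you correctly supply the finitary ingredient that completeness alone does not give: finiteness of derivations, or in your alternative the fact that filtered colimits of pretopoi are computed in $\Cat$, so that $0\cong 1$ already holds in some $\mc C_{T_0}$.
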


Using compactness, we can prove L\"owenheim–Skolem theorem:

\begin{theorem}[L\"owenheim–Skolem]\label{thm:ls}
  For any $\varphi$ in a pretopos $\mc C$, if there is a model $M$ such that $M\varphi$ is infinite, then for any infinite cardinal $\kappa$, there exists a model of $\mc C$ that interprets $\varphi$ to a set of size at least $\kappa$.
\end{theorem}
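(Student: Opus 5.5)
Since $\mc C$ is a pretopos, I would pass to its Morleyisation: a multi-sorted first-order theory $T$ whose sorts are the objects of $\mc C$. Its function and relation symbols name the morphisms and subobjects of $\mc C$, and its axioms are the coherent sequents valid in $\mc C$. In particular $T$ has axioms saying that finite limits, finite unions, images, coproducts and effective quotients are interpreted correctly. Models of $T$ in the classical sense (all sorts interpreted, equality standard) are then exactly the coherent functors $\mc C \to \Set$. The statement therefore reduces to the classical upward L\"owenheim–Skolem argument for the single sort $\varphi$.

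Fix an infinite cardinal $\kappa$ and a model $M$ with $M\varphi$ infinite. Add to $T$ a family of fresh constants $\set{c_\alpha}_{\alpha<\kappa}$ of sort $\varphi$, together with the axioms $c_\alpha \neq c_\beta$ for $\alpha\neq\beta$, and call the result $T'$. Negation is permitted here because we are now working in the classical first-order language, and Theorem~\ref{thm:compact} is stated for arbitrary first-order theories. I would then check finite satisfiability. A finite subset of $T'$ mentions only finitely many constants $c_{\alpha_1},\dots,c_{\alpha_n}$. Since $M\varphi$ is infinite, we can interpret these as $n$ distinct elements of $M\varphi$ and interpret the remaining constants arbitrarily. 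This expands $M$ to a model of that finite subset. By Theorem~\ref{thm:compact}, $T'$ has a model $N'$.

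Finally, take the reduct $N$ of $N'$ to the language of $T$. This $N$ is a model of $T$, hence a coherent functor $\mc C\to\Set$. The constants give an injection $\kappa \inj N\varphi$, so $|N\varphi|\ge\kappa$.

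The main point requiring care is the first step: checking that classical models of the Morleyised theory correspond exactly to coherent functors. Concretely, the translation must force every pretopos structure (coproducts disjoint, quotients effective, and so on) to be preserved, using only first-order axioms in the many-sorted language. This is standard (cf.~\cite[D1]{johnstone2002sketches}), and I would cite it rather than re-prove it. Everything after that is routine compactness.
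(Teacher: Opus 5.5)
Your proposal is correct and takes essentially the same route as the paper's sketch: pass to the canonical classical many-sorted theory of $\mc C$, add $\kappa$ pairwise distinct constants of sort $\varphi$, use the infinite set $M\varphi$ to get finite satisfiability, and apply compactness. You are somewhat more careful than the paper in two places: you point out that classical models of the Morleyised theory must be exactly the coherent functors, and you take the reduct explicitly.
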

\begin{proof}[Proof Sketch.]
  The usual model-theoretic proof can be easily transferred to this case. From a pretopos $\mc C$ we may canonically associate a classical first-order theory $T_{\mc C}$, by making each object in $\mc C$ a sort, each morphism a function symbol and each subobject a relation symbol. The axioms are those that are true in $\mc C$. For any cardinality $\kappa$, we may add $\kappa$ many new constants of sort $\varphi$, and the axioms that they are distinct (we need negation of equalities to express this). This new theory $T'_{\mc C}$ is finitely satisfiable, since we have assumed there is a model $M$ of $\mc C$ interpreting $\varphi$ to an infinite set. Thus by compactness, $T'_{\mc C}$ also has a model, which must be a model of $\mc C$ interpreting $\varphi$ to a set of size at least $\kappa$.
\end{proof}

\end{document}